  \def\cL{\mathcal{L}}
\newcommand{\tfa}{time-frequency analysis}
\newcommand{\stft}{short-time Fourier transform}
\newcommand{\tf}{time-frequency}
\newcommand{\fif}{if and only if}
\newcommand{\tfs}{time-frequency shift}
\newcommand{\psdo}{pseudodifferential operator}
\newtheorem{theorem}{Theorem}[section]
\newtheorem{lemma}[theorem]{Lemma}
\newtheorem{proposition}[theorem]{Proposition}
\newtheorem{definition}[theorem]{Definition}
\newtheorem{remark}[theorem]{Remark}
\newcommand{\beqa}{\begin{eqnarray*}}
\newcommand{\eeqa}{\end{eqnarray*}}
\newcommand{\field}[1]{\mathbb{#1}}
\newcommand{\bR}{\field{R}}        
\newcommand{\bN}{\field{N}}        
\newcommand{\bZ}{\field{Z}}        
\newcommand{\fiola}{FIO(\Xi)}
\def\Fu{\mathfrak{F}}
\def\la{\lambda}
\def\cF{\mathcal{F}}              
\def\cS{\mathcal{S}}
\def\cM{\mathcal{M}}
\def\cC{\mathcal{C}}
\def\a{\aleph}
\def\rd{\bR^d}
\def\rdd{{\bR^{2d}}}
\def\lrd{L^2(\rd)}
\def\intrd{\int_{\rd}}
\def\intrdd{\int_{\rdd}}
\def\R{\right)}
\def\<{\left<}
\def\>{\right>}
\def\mv1{M_v^1}
\def\phas{(x,\o )}
\def\mn{(m,n)}
\def\mn'{(m',n')}
\def\o{\eta}
\def\a{\alpha}
\def\R{\mathbb{R}}
\def\Ren{\mathbb{R}^d}
\def\Renn{\mathbb{R}^{2d}}
\def\Sn2{S_{2}(L^{2}(\Ren))}
\def\S1{S_{1}(L^{2}(\Ren))}
\def\sig00{\sigma_{0,0}}
\def\la{\langle}
\def\ra{\rangle}
\begin{document}
\begin{abstract} We consider the Schr\"odinger equation
\begin{equation*}
 i \displaystyle\frac{\partial
u}{\partial t} +Hu=0,\quad H=a(x,D),
\end{equation*}
where the Hamiltonian $a(z)$, $z=\phas$, is assumed real-valued and smooth, with bounded derivatives $|\partial^\a a(z)|\leq C_\a$, for every $|\a|\geq 2$, $z\in\rdd$. For such equation results are known concerning well-posedness of the Cauchy problem for initial data in $\lrd$ and local representation of the propagator $e^{it H}$ by means of Fourier integral operators.

In the present paper we give a global expression for $e^{itH}$ in terms of Gabor analysis and we deduce boundedness in modulation spaces. Moreover, by using \tf\, techniques, we obtain a result of propagation of micro-singularities for $e^{itH}$. 
\end{abstract}

\title[Gabor analysis for Schr\"odinger equations]{Gabor analysis for Schr\"odinger equations and propagation of singularities}

\author{Elena Cordero}
\address{Universit\`a di Torino, Dipartimento di Matematica, via Carlo Alberto 10, 10123 Torino, Italy}
\email{elena.cordero@unito.it}
\author{Fabio Nicola}
\address{Dipartimento di Scienze Matematiche,
Politecnico di Torino, corso Duca degli Abruzzi 24, 10129 Torino,
Italy}
\email{fabio.nicola@polito.it}
\author{Luigi Rodino}
\address{Universit\`a di Torino, Dipartimento di Matematica, via Carlo Alberto 10, 10123 Torino, Italy}
\email{luigi.rodino@unito.it}

\subjclass{Primary 35S30; Secondary 47G30}

\subjclass[2010]{35A18, 35A21, 35B65, 35S30, 42C15,
47G30, 47D08}
\keywords{Schr\"odinger equations, modulation spaces,
short-time Fourier
 transform, Fourier integral operators, wave front set}
\maketitle
\section{Introduction}

Time-frequency Analysis, also named Gabor Analysis cf. \cite{Daubechies90,book}, has found important applications in Signal Processing and related problems in Numerical Analysis, see for example \cite{cordero-feichtinger-luef,str06} and references therein. More recently, \tf\, methods have been applied to the study of the partial differential equations, in particular constant coefficient wave, Klein-Gordon, parabolic and Schr\"odinger equations \cite{bertinoro2,bertinoro3,bertinoro12,bertinoro17,kki1,kki2,kki3,MNRTT,bertinoro58,bertinoro58bis}, let us also refer to the survey \cite{ruz}
 and the monograph \cite{baoxiang}. The analysis of variable coefficient Schr\"odinger type equations was carried out in \cite{ACHA2015, locNC09, Wiener, fio1, fio3, bertinoro,CNRgabor,CNEdcds,tataru}, see also \cite{CNRanalitico1,CNRanalitico2} in the analytic category.

 In the present paper we address to the Schr\"odinger equation
 \begin{equation}\label{C1intro}
\begin{cases} i \displaystyle\frac{\partial
u}{\partial t} +a(x,D)u=0\\
u(0,x)=u_0(x).
\end{cases}
\end{equation}
 where the symbol $a(z)$, $z=(x,\xi)$, is real-valued and smooth, satisfying
 \begin{equation}\label{symbdec}
   |\partial^\a a(z)|\leq C_\a, \quad |\a|\geq 2, z\in\rdd.
 \end{equation}
In \eqref{C1intro} we understand
\begin{equation*}
a(x,D) f(x)=\intrd e^{2\pi i \la x,\xi\ra }a(x,\xi)\hat{f}(\xi)\,d\xi,
\end{equation*}
with
$$
\hat{f}(\xi)= \intrd e^{-2\pi i \la x,\xi\ra } f(x)\,dx.
$$
As basic example one can consider the case when $a(z)$ is a quadratic form in $z$, including the free particle operator $a(x,D)=-\Delta$ and the harmonic oscillator $a(x,D)=-\Delta+|x|^2$.

Under the assumption  \eqref{symbdec}, it is easy to show by energy methods that the Cauchy problem \eqref{C1intro} is well-posed in $\lrd$ and $\cS(\rd)$, see for example \cite{tataru} and we may consider the propagator
\begin{equation}\label{T1.5eq1}
e^{i t H}: \cS(\rd)\to \cS (\rd),\quad H=a(x,D),
\end{equation}
mapping the initial datum $u_0$ to the solution $u(t,x)$ at time $t\in\bR$.

Going further, one would like to obtain an explicit expression for $e^{it H}$, from which one may obtain precise estimates for the solutions, and numerical analysis. To this end, by using Fourier methods, one expects for small values of $t$ a representation as type $I$ Fourier integral operator
\begin{equation}\label{FIO1}
(e^{it H}u_0)(t,x)
=\intrd e^{2\pi i \Phi(t,x,\o)}b(t,x,\o) \hat{u}_0(\o)\,d\o.
\end{equation}
In fact, under our assumption \eqref{symbdec}, the phase function $\Phi$ has quadratic growth with respect to $x,\o$ and the amplitude $b$ is in the H\"ormander class $S^0_{0,0}$, i.e., bounded together with its derivatives. The case of Hamiltonians $a\phas$ of polyhomogeneous type was first treated in  \cite{Chazarain,helffer84,helfferRobert80}. For the present general case, where homogeneity is not assumed, see e.g. \cite{asada-fuji,wiener3,wiener4,JMP2014,Fujiwara79,gz,Tataru04,Tataru08,Parmegg88,Treves}.

Time-frequency analysis enters the picture at this moment, giving a global expression for $e^{i t H}$. Let us recall the notation for the \tfs s:
\begin{equation}\label{tfs}
    \pi(z)f(t)= M_\o T_x f(t)=e^{2\pi i \la t,\eta\ra} f(t-x), \quad x,\o,t\in\rd, \,\,z=\phas.
\end{equation}
The  \stft (STFT) of a function or distribution $f$ on $\rd$ with respect to a Schwartz window function $g\in\cS(\rd)\setminus\{0\}$ is defined by
\begin{equation}\label{STFT}
    V_g f\phas =\la f, \pi(z) g\ra= \intrd f(v) \overline{g(v-x)}e^{-2\pi i \la v,\eta\ra } dv,\quad z=\phas\in\rdd.
\end{equation}

The \tf \, representation of a linear continuous operator $P:\cS(\rd)\to \cS'(\rd)$ is provided by the (continuous)
Gabor matrix
\begin{equation}\label{GbM}
k(w,z):=\la P\pi(w)g,\pi(z) g\ra,\quad w,z\in\rdd
\end{equation}
so that
\begin{equation}\label{KGbM}
V_g(Pf)(z)=\intrdd k(w,z) V_g f(w) \,dw.
\end{equation}
Following the pattern of \cite{CNRrevmathp}, we study the Gabor matrix $k(t,w,z)$ of the propagator $e^{i t H}$.
Its structure is linked with the Hamiltonian
field of $a(x,\xi)$.  Precisely, consider
\begin{equation}\label{1.11}
\begin{cases}
 2\pi\dot{x}=-\nabla  _\xi a ( x,\xi) \\
 2\pi \dot{\xi}=\nabla _x a (x,\xi)\\
 x(0)=y,\ \xi(0)=\eta,
\end{cases}
\end{equation}
(the factor $2\pi$ depends on the normalization of the STFT).
The solution
$\chi_t(y,\eta)=(x(t,y,\eta),\xi(t,y,\eta))$ exists for all
$t\in\bR$ thanks to our hypothesis, and defines a symplectic diffeomorphism
$\chi_t:\,\bR_{y,\eta}^{2d}\to\bR_{x,\xi}^{2d}$. The components of $\chi_t$ are functions with bounded smooth derivatives of any order in $\bR\times\rd$.
\begin{theorem}\label{T1.1}
Let  $k(t,w,z)$ be
the Gabor matrix of the Schr\"{o}dinger propagator $e^{i t H}$.
Then for every $s>0$ there exists $C=C(t,s)>0$ such that
\begin{equation}\label{KT1.1}
|k(t,w,z)|\leq C\la z-\chi_t(w)\ra^{-s},\quad
z=(x,\xi),\,w=(y,\eta)\in\rdd.
\end{equation}
\end{theorem}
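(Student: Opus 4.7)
The plan is to combine a short-time Fourier integral operator representation of $e^{itH}$ with an algebraic composition argument that extends the short-time estimate to arbitrary $t\in\bR$, following the pattern outlined in \cite{CNRrevmathp}.

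For $|t|\leq \delta$ small enough, I would use the representation \eqref{FIO1}: under the hypothesis \eqref{symbdec}, $e^{itH}$ is a type I FIO with phase $\Phi(t,x,\omega)$ of quadratic growth and amplitude $b\in S^0_{0,0}$, with the property that $\Phi$ parametrizes the graph of $\chi_t$, i.e.\ $(x,\nabla_x\Phi)=\chi_t(\nabla_\omega\Phi,\omega)$. Inserting this into \eqref{GbM} yields
\[
k(t,w,z)=\intrdd e^{2\pi i[\Phi(t,x,\omega)-y\cdot(\omega-\eta)-x\cdot\xi]}\,b(t,x,\omega)\,\hat g(\omega-\eta)\,\overline{g(x-x_0)}\,\mathrm{d}x\,\mathrm{d}\omega,
\]
with $w=(y,\eta)$ and $z=(x_0,\xi)$. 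After translating $x$ by $x_0$ and $\omega$ by $\eta$, the gradient of the reduced phase in $(x,\omega)$ vanishes precisely when $\chi_t(w)=z$. Away from this point, repeated integration by parts against the vector field dual to the phase gradient, exploiting the Schwartz decay of $g,\hat g$, the $S^0_{0,0}$ estimates on $b$, and the boundedness of the second derivatives of $\Phi$ (a consequence of \eqref{symbdec} via the Hamilton--Jacobi equation for $\Phi$), gives $|k(t,w,z)|\leq C_s\langle z-\chi_t(w)\rangle^{-s}$ for every $s>0$.

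For arbitrary $t\in\bR$, I would partition $[0,t]$ into subintervals of length at most $\delta$ and write $e^{itH}$ as a product of short-time propagators. The group property $e^{i(t_1+t_2)H}=e^{it_2 H}e^{it_1 H}$ corresponds to $\chi_{t_1+t_2}=\chi_{t_2}\circ\chi_{t_1}$. By \eqref{KGbM}, the Gabor matrix of a composition is
\[
k_{P_2 P_1}(w,z)=\intrdd k_{P_2}(u,z)\,k_{P_1}(w,u)\,\mathrm{d}u,
\]
so the crux reduces to a composition lemma: if $|k_j(w,z)|\leq C_N\langle z-\chi^{(j)}(w)\rangle^{-N}$ for every $N$, with each $\chi^{(j)}$ bi-Lipschitz on $\rdd$, then the composed kernel satisfies the analogous estimate relative to $\chi^{(2)}\circ\chi^{(1)}$. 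After the substitution $u=\chi^{(1)}(w)+v$, Peetre's inequality together with the Lipschitz property of $\chi^{(2)}$ gives
\[
\langle z-\chi^{(2)}(\chi^{(1)}(w)+v)\rangle\geq c\,\langle z-\chi^{(2)}(\chi^{(1)}(w))\rangle\langle v\rangle^{-1},
\]
and absorbing a factor $\langle v\rangle^{N}$ into the first kernel while integrating against $\langle v\rangle^{-N-2d-1}$ from the second produces the required bound, provided the initial decay exponent is chosen sufficiently large.

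The main obstacle is the short-time step: one must set up the FIO parametrix cleanly, identify the critical set of the phase with the graph of $\chi_t$, and run the non-stationary phase analysis \emph{uniformly} in $(w,z)\in\rdd\times\rdd$, for which the $S^0_{0,0}$ structure of $b$ and the quadratic growth of $\Phi$ are both essential. The long-time step, by contrast, reduces to the algebra property of the class of operators with Gabor matrix rapidly decaying off a bi-Lipschitz map on phase space — a mechanism already well developed in the cited references, where the composition lemma sketched above is essentially standard.
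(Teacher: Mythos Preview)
Your proposal is correct and follows essentially the same route as the paper: a short-time FIO parametrix yields the Gabor matrix decay \eqref{KT1.1} for $|t|$ small, and the group property of $e^{itH}$ together with the algebra/composition property of the classes $FIO(\chi)$ extends this to all $t\in\bR$ via $\chi_{t_1+t_2}=\chi_{t_1}\circ\chi_{t_2}$. The only cosmetic difference is that for the short-time step the paper invokes the characterization Theorem~\ref{caraI} (from \cite{Wiener}) as a black box---a type~I FIO with symbol in $S^0_{0,0}$ and tame phase automatically lies in $FIO(\chi)$---whereas you sketch directly the non-stationary phase argument that underlies that theorem; the mathematical content is the same.
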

 For $t$ small enough
our assumptions yield $\det \frac{\partial x}{\partial
y}(t,y,\eta)\not=0$ in the expression of $\chi_t$, and
\eqref{KT1.1} is then equivalent to \eqref{FIO1}
with the phase $\Phi$ linked to $\chi_t$ as standard and
$b(t,\cdot)\in S^0_{0,0}$, see the next Section $2$. In the
classical approach, cf.\ \cite{asada-fuji}, the occurrence of caustics
makes the validity of \eqref{FIO1} local in time and for global time $t\in
\bR$  multiple compositions of local representations are used,
with unbounded number of variables possibly appearing in the
expression. Observe instead that $k(t,w,z)$ keeps meaning  for every
$t\in\bR$, and the estimates \eqref{KT1.1} hold for $\chi_t$ with
$t\in\bR$.

A natural
functional frame to express boundedness and propagation results  for $e^{it H}$ is given by the modulation spaces, see \cite{F1} and the short survey in Section
\ref{2}.\par For  $1\leq p\leq \infty$, $r\in\bR$, the modulation space $M^p_r(\rd)$
is  defined by the space of all $f\in\cS'(\rd)$ for which
\begin{equation}\label{minfty}
\|f\|^p_{M^p_r(\rd)}=\intrdd |V_g f(z)|^p \la z\ra^{pr} dz<\infty
\end{equation}
(with obvious changes for $p=\infty$).\par
From Theorem \ref{T1.1} it is easy to deduce the following
\begin{theorem}\label{T1.2new}
For every $r\in\bR$, $1\leq p\leq \infty$, $t\in \bR$, we have
\begin{equation}\label{KT3.3}
e^{i t H}: M^p_r(\rd)\to M^p_r(\rd).
\end{equation}
\end{theorem}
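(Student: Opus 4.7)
The plan is to read off boundedness on $M^p_r$ from the Gabor matrix estimate of Theorem~\ref{T1.1} by combining the STFT representation \eqref{KGbM} with the observation that the symplectic flow $\chi_t$ is Jacobian preserving and bi-Lipschitz at infinity. Recall that the modulation space norm is equivalent to $\|V_g f\|_{L^p_r(\rdd)}$, so everything reduces to an $L^p$ estimate for the integral operator on $\rdd$ with kernel $k(t,w,z)$.

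First, fix $s$ large (to be chosen depending on $r,p,d$) and apply Theorem~\ref{T1.1} to obtain
\begin{equation*}
|V_g(e^{itH}f)(z)|\leq C\intrdd \langle z-\chi_t(w)\rangle^{-s}|V_g f(w)|\,dw.
\end{equation*}
Since $\chi_t$ is a symplectomorphism its Jacobian is identically one, so the change of variables $u=\chi_t(w)$, equivalently $w=\chi_t^{-1}(u)=\chi_{-t}(u)$, gives
\begin{equation*}
|V_g(e^{itH}f)(z)|\leq C\intrdd \langle z-u\rangle^{-s}|V_g f(\chi_{-t}(u))|\,du.
\end{equation*}

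Next, handle the polynomial weight by Peetre's inequality $\langle z\rangle^r\leq C_r\langle z-u\rangle^{|r|}\langle u\rangle^r$, which yields
\begin{equation*}
\langle z\rangle^r|V_g(e^{itH}f)(z)|\leq C\intrdd \langle z-u\rangle^{-s+|r|}\bigl(\langle u\rangle^r|V_g f(\chi_{-t}(u))|\bigr)\,du.
\end{equation*}
Choosing $s>2d+|r|$ makes the first factor an $L^1(\rdd)$ convolution kernel, so Young's inequality (or Minkowski's integral inequality, also in the cases $p=1,\infty$) bounds the $L^p$-norm of the left-hand side by $C_{t,s,r}\,\|\langle u\rangle^r V_g f(\chi_{-t}(u))\|_{L^p(du)}$.

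Finally, because the components of $\chi_t$ have bounded derivatives of all orders uniformly for $t$ in compact sets, both $\chi_t$ and $\chi_{-t}$ grow at most linearly, hence $\langle\chi_t(v)\rangle\asymp\langle v\rangle$ with constants depending only on $t$. Inserting $v=\chi_{-t}(u)$ (again with unit Jacobian) gives
\begin{equation*}
\|\langle u\rangle^r V_g f(\chi_{-t}(u))\|_{L^p(du)}\leq C_t\,\|\langle v\rangle^r V_g f(v)\|_{L^p(dv)}=C_t\,\|f\|_{M^p_r},
\end{equation*}
and stringing the three estimates together yields \eqref{KT3.3}. The only subtle point is the mismatch between the decay variable $z-\chi_t(w)$ of the Gabor matrix and the weight $\langle z\rangle^r$; Peetre's inequality together with the polynomial comparability $\langle\chi_t(v)\rangle\asymp\langle v\rangle$ is exactly what bridges this gap, and this is the step I expect to be the main (though mild) obstacle.
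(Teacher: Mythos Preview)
Your proof is correct and follows essentially the same route as the paper: the paper simply invokes Theorem~\ref{contmp} (boundedness of $T\in FIO(\chi)$ on $M^p_{\mu}$, proved via Schur's test) together with the observation $v_r\circ\chi_t\asymp v_r$ from the bi-Lipschitz property, which is exactly what your Peetre/Young argument and the comparability $\langle\chi_t(v)\rangle\asymp\langle v\rangle$ unpack explicitly. Your use of the unit Jacobian of the symplectomorphism to reduce to a genuine convolution is a clean variant of the same idea.
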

The proofs of Theorems \ref{T1.1} and \ref{T1.2new} are given in \cite{CNRrevmathp} when the symbol $a\phas$ is a polyhomogeneous symbol. The proof for the present non-homogeneous situation follows closely the one in \cite{CNRrevmathp}. Our aim being to introduce non-expert readers to the methods of the \tfa, we shall reproduce in the sequel the main lines of the argument.\par
Novelty with respect to \cite{CNRrevmathp} will be a result of propagation of (micro) singularities for the solutions of \eqref{C1intro}. In fact, in \cite{CNRrevmathp} by taking advantage of the homogeneous structure, the propagation was expressed in terms of the global (Gabor) wave front set, whereas in
\eqref{C1intro}, \eqref{symbdec} homogeneity is lost in general. This will require the use of a more refined notion, namely the filter of the Gabor singularities, see below.

Let us first recall that the global wave front set was introduced by H{\"o}rmander \cite{hormanderglobalwfs91} in 1991, see also \cite{rw}, where the name of Gabor wave front set was given. The work of H{\"o}rmander \cite{hormanderglobalwfs91} was addressed to the study of the hyperbolic equations with double characteristics, and also provided propagation of singularities for Schr\"odinger equations \eqref{C1intro} with quadratic Hamiltonian $a(z)$. Such result was generalized to different classes of linear and nonlinear equations, beside \cite{CNRrevmathp} see for example \cite{nicola,RodinoW2015} and \cite{CNRanalitico2}, concerning the analytic category. One may find in these papers references to the wide previous literature on the subject. \par
The renewed and increasing interest for the Gabor wave front set derives from the fact that, under the action of a metaplectic operator, it moves according to the associated linear symplectic transformation. More generally, if $P$ is a Fourier integral operator as in \eqref{FIO1} with a phase function $\Phi$ homogeneous of degree $2$ in $\phas$, then the Gabor wave front set is determined by the corresponding map $\chi_t$.\par
Returning to the present non-homogeneous context, our definition of Gabor (micro) singularity will be as follows. Let $\Gamma$ be a subset of $\rdd$. Given a distribution $f\in\cS'(\rd)$, we say that $f$ is $M^p_r$ regular in $\Gamma$, $1\leq p\leq \infty$, $r\in\bR$, if there exists a neighborhood $\Gamma_\delta$ of $\Gamma$ (see precise definitions in Section $4$) such that
\begin{equation}\label{wVg}
   \int_{\Gamma_\delta} |V_g f(z)|^p \la z \ra ^{pr} dz <\infty.
\end{equation}
 We shall call \emph{filter of the $M^p_r$ singularities of $f$} the collection of subsets of $\rdd$
\begin{equation}\label{filter}
    \mathcal{F}^p_r(f)=\{ \Lambda \subset \rdd, \, f\, \mbox{is}\, M^p_r \,\mbox{regular\, in\,} \Gamma=\rdd\setminus\Lambda\}.
\end{equation}
Note that $f$ is regular in any bounded set $\Gamma\subset\rdd$. \par
We may now state our main result.
\begin{theorem}\label{T1.3new}
For every $r\in\bR$, $1\leq p\leq \infty$, $t\in \bR$, $u_0\in\cS'(\rd)$, we have
\begin{equation}\label{KT1.2}
\chi_t(\cF^p_r(u_0))=\cF_r^p(e^{i t H}u_0).
\end{equation}
\end{theorem}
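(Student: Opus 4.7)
The plan is to transport the local regularity of $V_g u_0$ around $\Gamma$ to local regularity of $V_g(e^{itH}u_0)$ around $\chi_t(\Gamma)$, using the continuous Gabor matrix representation \eqref{KGbM} together with the rapid decay \eqref{KT1.1} from Theorem \ref{T1.1} as the transfer mechanism.

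First, since $e^{-itH}$ is the inverse of $e^{itH}$ and the Hamiltonian flow satisfies $\chi_{-t}=\chi_t^{-1}$, one of the inclusions in \eqref{KT1.2} implies the other by time reversal; it suffices to prove $\chi_t(\cF^p_r(u_0))\subseteq\cF^p_r(e^{itH}u_0)$. Fix $\Lambda\in\cF^p_r(u_0)$ and set $\Gamma=\rdd\setminus\Lambda$; by assumption there is a neighborhood $\Gamma_\delta$ of $\Gamma$ on which $V_g u_0\in L^p(\la w\ra^{pr}dw)$. Since the components of $\chi_t$ and $\chi_{-t}$ have bounded derivatives, they are bi-Lipschitz, so one can choose $\delta'>0$ with $(\chi_t(\Gamma))_{\delta'}\subseteq\chi_t(\Gamma_{\delta/2})$, and the task reduces to bounding $\int_{(\chi_t(\Gamma))_{\delta'}}|V_g(e^{itH}u_0)(z)|^p\la z\ra^{pr}dz$.

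Next, insert the representation $V_g(e^{itH}u_0)(z)=\int_{\rdd}k(t,w,z)V_g u_0(w)dw$ and split the $w$-integration at an intermediate neighborhood $\Gamma_{3\delta/4}$, writing $V_g(e^{itH}u_0)=I_1+I_2$. For $I_1$ the input $V_g u_0\cdot\chi_{\Gamma_{3\delta/4}}$ lies in $L^p(\la w\ra^{pr}dw)$ by hypothesis. Combining Peetre's inequality, the equivalence $\la w\ra\asymp\la\chi_t(w)\ra$ coming from bi-Lipschitz-ness, and the symplectic (unit-Jacobian) change of variables $w\mapsto\chi_t(w)$, an application of Schur's test to the rescaled kernel $\la z\ra^r\la w\ra^{-r}\la z-\chi_t(w)\ra^{-s}$ (with $s$ sufficiently large depending on $r$, $d$, $p$) shows that the associated operator is bounded on $L^p(\rdd)$; hence $\int_{\rdd}|I_1(z)|^p\la z\ra^{pr}dz<\infty$. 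For $I_2$ one exploits the geometric separation: for $z\in(\chi_t(\Gamma))_{\delta'}$ and $w\in(\Gamma_{3\delta/4})^c$, the bi-Lipschitz estimate gives $|z-\chi_t(w)|\geq c>0$, and with the scale-adapted (conical) neighborhoods to be introduced in Section 4 this upgrades to a lower bound growing with $\la w\ra$. The freedom to take $s$ arbitrarily large in \eqref{KT1.1} then dominates the polynomial estimate $|V_g u_0(w)|\leq C\la w\ra^N$ coming from $u_0\in\cS'$, giving the desired $L^p(\la z\ra^{pr}dz)$ control of $I_2$ on $(\chi_t(\Gamma))_{\delta'}$. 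Combining the two parts yields $\chi_t(\Lambda)\in\cF^p_r(e^{itH}u_0)$.

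The main obstacle is the $I_2$ estimate when the neighborhoods extend to infinity: a purely uniform separation $|z-\chi_t(w)|\geq c$ is not enough to beat the polynomial growth of $V_g u_0$ over its unbounded ``bad'' region, so one must use the scale-sensitive definition of the neighborhoods $\Gamma_\delta$ (conical, or metric balls whose width scales with $\la\cdot\ra$, as spelled out in Section 4) to obtain separation growing with $\la w\ra$. This is the point at which the symplectic and bi-Lipschitz nature of $\chi_t$ must be coupled with the precise definition of the filter of singularities; once it is in place, the rapid decay of the Gabor matrix makes the transport argument essentially mechanical.
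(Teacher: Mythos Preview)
Your proposal is correct and follows essentially the same route as the paper: reduce to one inclusion by time reversal, represent $V_g(e^{itH}u_0)$ via the Gabor matrix with decay \eqref{KT1.1}, split the $w$-integration into the region where $V_gu_0$ is controlled (handled by a Young/Schur argument after the bi-Lipschitz change of variables $w\mapsto\chi_t(w)$) and its complement (handled by the scale-adapted separation $|z-\chi_t(w)|\gtrsim\max\{\langle z\rangle,\langle w\rangle\}$ together with the a priori temperedness of $V_gu_0$). The paper packages the nested-neighborhood geometry via two preparatory lemmas (Lemmas \ref{lemma7.1} and \ref{lemma7.2}) and uses the $M^p_{-s_0}$ bound \eqref{eq7.6} rather than the pointwise polynomial bound on $V_gu_0$, but these are cosmetic variations of the same argument.
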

The content of the next sections is the following. Section $2$ is devoted to some preliminaries, concerning properties of the \stft, modulation spaces and canonical transformations. Section $3$ presents the theory of the Fourier integral operators in terms of Gabor analysis, cf \cite{Wiener}. The proofs of Theorems \ref{T1.1} and \ref{T1.2new} are given as application. Section $4$ contains the analysis of the Gabor singularities and the proof of Theorem \ref{T1.3new}.

\vskip0.3truecm
\section{Preliminaries}\label{2}
 In what follows there are  the basic
concepts  of \tfa. For details we refer  to \cite{book}. We also discuss the properties of phase functions and canonical transformations.
\subsection{The Short-time Fourier Transform }\label{2.1}
Given a distribution $f\in\cS '(\rd)$ and a Schwartz function
$g\in\cS(\rd)\setminus\{0\}$ (the so-called {\it window}), the
short-time Fourier transform (STFT) of $f$ with respect to $g$ is defined as in \eqref{STFT}.
 The  \stft\ is well-defined whenever  the bracket $\langle \cdot , \cdot \rangle$ makes sense for
dual pairs of function or distribution spaces, in particular for $f\in
\cS ' (\rd )$ and $g\in \cS (\rd )$, $f,g\in\lrd$. Consider  $f,g\in\cS(\rd)$, then $V_gf\in\cS(\rdd)$.

\par
 We recall the following pointwise inequality for the \stft\
 \cite[Lemma 11.3.3]{book}, used  to change window functions.
 \begin{lemma}\label{changewind}
 If  $g_0,g_1,\gamma\in\cS(\rd)$ such
 that $\la \gamma, g_1\ra\not=0$ and
 $f\in\cS'(\rd)$,  then the inequality
 $$|V_{g_0} f(x,\xi)|\leq\frac1{|\la\gamma,g_1\ra|}(|V_{g_1} f|\ast|V_{g_0}\gamma|)(x,\xi)$$
 holds pointwise for all $(x,\xi)\in\rdd$.
 \end{lemma}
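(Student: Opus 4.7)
The plan is to derive the inequality from the STFT inversion (reproducing) formula combined with the covariance property of the STFT under time-frequency shifts. Concretely, since $\la\gamma,g_1\ra\neq 0$, one has the weak reproducing identity
\begin{equation*}
f=\frac{1}{\la\gamma,g_1\ra}\intrdd V_{g_1}f(w)\,\pi(w)\gamma\,dw,
\end{equation*}
valid for $f\in\cS'(\rd)$ with the integral interpreted in the sense of $\cS'(\rd)$ (or even pointwise when $f\in\cS(\rd)$). The strategy is to apply the linear functional $\la\,\cdot\,,\pi(z)g_0\ra$ to both sides and then take absolute values.

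The first step is to compute $V_{g_0}(\pi(w)\gamma)(z)$. Writing $w=(u,v)$ and $z=(x,\xi)$, a direct change of variable in \eqref{STFT}, using $\pi(w)\gamma(t)=e^{2\pi i \la t,v\ra}\gamma(t-u)$, produces the standard covariance identity
\begin{equation*}
V_{g_0}(\pi(w)\gamma)(z)=e^{-2\pi i\la x-u,v\ra}\,V_{g_0}\gamma(z-w),
\end{equation*}
whose modulus equals $|V_{g_0}\gamma(z-w)|$. This is the only non-trivial computation, and it is routine.

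The second step is to move $V_{g_0}$ inside the integral in the reproducing formula. For $f\in\cS(\rd)$ (or for $f\in\lrd$) this interchange is immediate because $V_{g_1}f\cdot\pi(\cdot)\gamma$ is Bochner integrable in $\cS(\rd)$; for general $f\in\cS'(\rd)$ the bracket with the Schwartz function $\pi(z)g_0$ exchanges with the distributional integral by the definition of the reproducing formula. This yields
\begin{equation*}
V_{g_0}f(z)=\frac{1}{\la\gamma,g_1\ra}\intrdd V_{g_1}f(w)\,V_{g_0}(\pi(w)\gamma)(z)\,dw.
\end{equation*}

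Taking absolute values, applying the triangle inequality under the integral sign, and inserting the covariance identity from the first step gives
\begin{equation*}
|V_{g_0}f(z)|\leq\frac{1}{|\la\gamma,g_1\ra|}\intrdd|V_{g_1}f(w)|\,|V_{g_0}\gamma(z-w)|\,dw,
\end{equation*}
which is precisely the stated convolution inequality. The main (minor) obstacle is justifying the interchange of $V_{g_0}$ with the reproducing integral for tempered distributions; this is handled by testing against the Schwartz window $\pi(z)g_0$ and invoking the standard weak formulation of the inversion formula.
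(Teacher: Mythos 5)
Your argument is correct and is essentially the standard proof of this lemma (the paper itself gives no proof, citing \cite[Lemma 11.3.3]{book}, and the proof there proceeds exactly as you do: inversion formula with the pair $(g_1,\gamma)$, covariance of the STFT under $\pi(w)$ giving $|V_{g_0}(\pi(w)\gamma)(z)|=|V_{g_0}\gamma(z-w)|$, then the triangle inequality). The only cosmetic point is that your phase factor in the covariance identity need not be exactly $e^{-2\pi i\la x-u,v\ra}$ under the paper's normalization of the STFT, but since only its modulus enters the estimate this is immaterial.
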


\subsection{Modulation spaces}\label{2.2}
Weighted modulation spaces measure the decay of the STFT on the time-frequency (phase space) plane. They were introduced by Feichtinger in the 80's \cite{F1}.

\emph{Weight Functions.}  A weight function $v$ is submultiplicative if $ v(z_1+z_2)\leq v(z_1)v(z_2)$, for all $z_1,z_2\in\Renn.$  We shall work with the weight functions
\begin{equation}\label{weight} v_s(z)=\la z\ra^s=(1+|z|^2)^{\frac s 2},\quad s\in\R,
\end{equation}
which are submultiplicative for $s\geq0$.\par
 For $s\geq0$, we denote by $\mathcal{M}_{v_s}(\rdd)$ the space of $v_s$-moderate weights on $\rdd$.  These  are measurable positive functions $m$ satisfying $$m(z+w)\leq C
v_s(z)m(w)$$ for every $z,w\in\rdd$.

\begin{definition}  \label{prva}
Given  $g\in\cS(\rd)$, $s\geq0$,  $m\in\mathcal{M}_{v_s}(\rdd)$, and $1\leq p,q\leq
\infty$, the {\it
  modulation space} $M^{p,q}_m(\Ren)$ consists of all tempered
distributions $f\in \cS' (\rd) $ such that $V_gf\in L^{p,q}_m(\Renn )$
(weighted mixed-norm spaces). The norm on $M^{p,q}_m(\rd)$ is
\begin{equation}\label{defmod}
\|f\|_{M^{p,q}_m}=\|V_gf\|_{L^{p,q}_m}=\left(\int_{\Ren}
  \left(\int_{\Ren}|V_gf(x,\xi)|^pm(x,\xi)^p\,
    dx\right)^{q/p}d\xi\right)^{1/q}  \,
\end{equation}
(obvious modifications if $p=\infty$ or $q=\infty$).
\end{definition}
 When $p=q$, we  write $M^{p}_m(\rd)$ instead of $M^{p,p}_m(\rd)$ and $M^{p}_s(\rd)$ for $M^{p}_{v_s}(\rd)$. The spaces $M^{p,q}_m(\rd)$ are Banach spaces and every nonzero $g\in M^{1}_{v_s}(\rd)$ yields an equivalent norm in \eqref{defmod} and hence $M^{p,q}_m(\Ren)$ is independent on the choice of $g\in  M^{1}_{v_s}(\rd)$.

We recover the H\"ormander
class
\begin{equation}\label{HC}S^0_{0,0}=\bigcap_{s\geq 0}M^{\infty}_{1\otimes v_s}(\rdd).\end{equation}
For any $1\leq p,q\leq\infty$,

\begin{equation}\label{HCbis}\bigcap_{s\geq 0}M^{p,q}_{ v_s}(\rd)=\cS(\rd),\quad \bigcup_{s\geq 0}M^{p,q}_{ v_{-s}}(\rd)=\cS'(\rd).\end{equation}

Fix $g\in\cS(\rd)\setminus\{0\}$. The adjoint operator of $V_g$,  defined by $\la V_g^\ast F, h\ra=\la F,V_g h\ra$, can be written as
 \begin{equation}\label{adj}V_g^\ast F=\intrdd F(x,\xi) \pi(x,\xi) g dx d\xi,
\end{equation}
 The adjoint $ V_g^\ast$ maps  the Banach space $L^{p,q}_m(\rdd)$ into $M^{p,q}_m(\rd)$, in particular it maps $\cS(\rdd)$ into $\cS(\rd)$ and the same for their dual spaces. If $F=V_g f$ we obtain the  inversion formula for the STFT
 \begin{equation}\label{treduetre}
 {\rm Id}_{M^{p,q}_m}=\frac 1 {\|g\|_2^2} V_g^\ast V_g
 \end{equation}
(the same holds when replacing $M^{p,q}_m(\rd)$ by $\cS(\rd)$ or $\cS'(\rd)$).\par

\subsection{Phase functions and canonical transformations}\label{2.3}
Let $a$ be as in the Introduction, real-valued and satisfying \eqref{symbdec}. The related classical evolution, given by the linear Hamilton-Jacobi
system, following our normalization can be written as
\begin{equation}\label{HS}
\begin{cases}
 2\pi \partial_t x(t,y,\eta)=-\nabla  _\xi a ( x(t,y,\eta),\xi(t,y,\eta)) \\
 2\pi \partial_t\xi(t,y,\eta)=\nabla _x a (x(t,y,\eta),\xi(t,y,\eta))\\
 x(0,y,\eta)=y,\\
 \xi(0,y,\eta)=\eta.
\end{cases}
\end{equation}

The solution $(x(t,y,\eta),\xi(t,y,\eta))$ exists for every $t\in\bR$. Indeed,
setting $u:=(x,\xi)$, $F(u):=(-\nabla_\xi a(u), \nabla _x a(u))$,
 the initial value problem \eqref{HS} can be rephrased as
\begin{equation}\label{IVP}
u^\prime (t)=F(u(t)),\quad u(t_0)=u_0,
\end{equation}
in the particular case $t_0=0$.
Observe that our assumptions on $a$  imply the boundedness of  $\partial^\a F_j$, for every $|\a|>0$,  $j=1,\dots,2d$, hence in particular $F: \rdd\to \rdd$ is a Lipschitz continuous mapping. The previous ODE  is an autonomous ODE with a mapping $F\in\cC^\infty(\rdd\to\rdd)$ having at most linear growth, hence $\|F(u)\|\lesssim 1+\|u \|$. Hence for each $u_0\in\rdd$ and $t_0\in\bR$ there exists a unique classical global solution $u\,:\, \bR\to \rdd$ (in this case  $u\in \cC^\infty(\bR\to\rdd)$ since $F\in\cC^\infty(\rdd\to\rdd)$)  to \eqref{IVP}. The solution maps $S_{t_0}(t)\,:\rdd\to\cC^\infty(\bR\to\rdd)$,  defined by $S_{t_0}(t)u_0=u(t)$, and  $S_{t_0}(t_0)={\rm Id}$, the identity operator on $\rdd$, are  Lipschitz continuous mappings, obeying  $S_{t_0}(t)=S_0(t-t_0)$ and the group laws
\begin{equation}\label{prodotto}
S_{0}(t)S_0(t')=S_0(t+t'),\quad S_0(0)={\rm Id}.
\end{equation}
The mapping $S_{0}(t)$ is a bi-Lipschitz diffeomorphism with $S_{0}^{-1}(t)=S_{0}(-t)$.
Following the notations of  \cite{Wiener,CNRrevmathp},  we call the bi-Lipschitz diffeomorphism
\begin{equation}\label{mappachi}\chi_t(y,\eta):=S_0(t)(y,\eta),\quad (y,\eta)\in\rdd.\end{equation}

The theory of Hamilton-Jacobi allows to find a $T>0$ such that for $t\in ]-T,T[$ there exists a phase function  $\Phi(t,x,\eta)$, solution of the eiconal equation 
\begin{equation}\label{eiconal}
\begin{cases}
2\pi \partial_t\Phi+a (x,\nabla_x\Phi)=0\\
\Phi(0,x,\eta)=x \eta
\end{cases}
\end{equation}
The phase $\Phi(t,x,\eta)$ is real-valued since the 
symbol $a(x,\xi)$ is real-valued, moreover $\Phi$ fulfills  the condition
of non-degeneracy:
\begin{equation}\label{detmisto}
|\det \partial_{x,\eta}^2 \Phi(t,x,\eta)|\geq c>0,\quad
(t,x,\eta)\in ]-T,T[\times \rdd,
\end{equation}
after possibly shrinking $T>0$,  and satisfies
\begin{equation}\label{39bis}
    |\partial^k_t\partial^\a_{x,\o}\Phi(t,x,\o)|\leq c_{k,\a},\quad |\a|\geq 2,\,k\geq0,\,(t,x,\o)\in ]-T,T[\times \rdd.
\end{equation}
 The relation between the phase $\Phi$ and the canonical transformation $\chi$ is given by
\begin{equation}\label{rel-chi-phi}
(x,\nabla_x\Phi(t,x,\eta))=\chi_t(\nabla_\eta\Phi(t,x,\eta),\eta),\quad t\in ]-T,T[.
\end{equation}
In particular,
\begin{equation}\label{cantra} \left\{
                \begin{array}{l}
                y(t,x,\eta)=\nabla_{\eta}\Phi(t,x,\eta)
                \\
               \xi(t,x,\eta)=\nabla_{x}\Phi(t,x,\eta), \rule{0mm}{0.55cm}
                \end{array}
                \right.
\end{equation}
and there exists $\delta>0$ such that
\begin{equation}\label{detcond2}
   |\det\,\frac{\partial x}{\partial y}(t,y,\eta)|\geq \delta, \quad t\in ]-T,T[.
\end{equation}
Observe that each component of $\chi_t$ is a function with bounded smooth derivatives of any order in $]-T,T[\times \rdd$.  Using \eqref{prodotto} we observe that the same holds in fact for every $t\in\bR$.

For $t\in ]-T,T[$, the  phase function $\Phi(t,\cdot)$  is  a \emph{tame} phase,  and similarly for the canonical transformation $\chi_t$, according to the following definition \cite[Definition 2.1]{Wiener}:\par

\emph{A real and smooth phase function $\Phi(x,\eta)$ on $\rdd$ is called \emph{tame} if}:\\
(i) \emph{For} $z=\phas$,
\begin{equation}\label{phasedecay}
|\partial_z^\a \Phi(z)|\leq C_\a,\quad |\a|\geq 2;\end{equation}
(ii) \emph{There exists $c>0$ such that} 
\begin{equation}\label{detcond}
   |\det\,\partial^2_{x,\eta} \Phi(x,\o)|\geq c.
\end{equation}
 
 \par \emph{The mapping  defined by $(x,\xi)=\chi(y,\o)$, which solves the system}
 \begin{equation}\label{cantra2} \left\{
                 \begin{array}{l}
                 y(x,\eta)=\nabla_{\eta}\Phi(x,\eta)
                 \\
                \xi(x,\eta)=\nabla_{x}\Phi(x,\eta), \rule{0mm}{0.55cm}
                 \end{array}
                 \right.
 \end{equation}
\emph{ is called \emph{tame} canonical transformation.}\par

Observe that we have no assumption of homogeneity for large $(x,\eta)$, nevertheless the mapping $\chi$ is well-defined by the global inverse
function theorem. The mapping  $\chi$ is a smooth bi-Lipschitz canonical transformation (i.e.\ it preserves the symplectic form)  and satisfies, for $(x,\xi)=\chi(y,\o)$,
\begin{equation}\label{B2}
|\partial_{z}^\a x_i(z)|+|\partial_{z}^\a \xi_i(z)|\leq C_\a,\quad |\a|\geq 1,\,\,z=(y,\eta),\,\,i=1,\dots,d.\end{equation}
The mapping $\chi$ enjoys  \begin{equation}\label{detcond2?}
    |\det\,\frac{\partial x}{\partial y}(y,\eta)|\geq \delta
 \end{equation}
 (that is \eqref{detcond2} for the canonical transformations of the Hamilton-Jacobi theory), which allows to uniquely determine, up to a constant, the related tame phase function $\Phi_\chi$ (see \cite[Section 2]{Wiener}).

\section{Fourier Integral Operators}
\subsection{The classes $FIO(\chi)$}
The class $FIO(\chi)$ was introduced in \cite{Wiener} and its definition can be rephrased as follows.

\begin{definition}
Let $g\in\cS(\rd)$ be a non-zero window function. Consider   a canonical transformation $\chi$ which is a smooth bi-Lipschitz diffeomorphism and satisfies \eqref{B2}. A
continuous linear operator $T:\cS(\rd)\to\cS'(\rd)$ is in the
class $FIO(\chi)$ if its (continuous) Gabor matrix  satisfies for all $s>0$ the decay
condition
\begin{equation}\label{asterisco}
|\langle T \pi(w) g,\pi(z)g\rangle|\leq {C}\langle z-\chi(w)\rangle^{-s},\qquad \forall z,w\in\rdd,
\end{equation}
for a constant $C>0$ depending on $s$.
\end{definition}
Note that we do not require \eqref{detcond2?} to be valid.\par
The class $\fiola = \bigcup _{\chi } FIO (\chi)$ is  the union of
these classes where  $\chi$ runs over the  set of all  smooth bi-Lipschitz canonical transformations satisfying \eqref{B2}.

The following properties are proved in \cite{Wiener}.
\begin{itemize}
    \item [(i)] \emph{Boundedness of $T$ on $M^p(\rd)$} (\cite[Theorem 3.4]{Wiener}):\\
    If $T\in FIO(\chi)$, then $T$ can be extended to a bounded operator on $M^p(\rd)$ (in particular on $\lrd$).
    \item[(ii)] \emph{The algebra property} (\cite[Theorem 3.6]{Wiener}): For $ i=1,2$, 
\begin{equation}\label{algebra}T^{(i)}\in FIO(\chi_i)\quad \Rightarrow \quad T^{(1)}T^{(2)}\in
FIO(\chi_1\circ \chi_2) \, .\end{equation}
 \item[(iii)] \emph{The Wiener property} (\cite[Theorem 3.7]{Wiener}): If $T\in FIO(\chi)$  and  $T$ is invertible on $L^2(\rd)$, then $T^{-1} \in FIO(\chi^{-1})$.
\end{itemize}
These  properties imply that the union $\fiola$ is a Wiener subalgebra of $\cL
(\lrd )$, the class of linear bounded operators on $\lrd$. Property (ii) can be refined as follows.
\begin{lemma} For  $T^{(i)}\in FIO(\chi_i)$, $ i=1,2$, the continuous Gabor matrix of the composition $T^{(1)}T^{(2)}$ is controlled for every $s\geq 0$ by
\begin{equation}\label{controllo1}
|\langle T^{(1)}  T^{(2)}\pi(w) g,\pi(z)g\rangle|\leq C_0 C_1 C_2 \la z-\chi_1\circ\chi_2(w)\ra^{-s}, w,z\in\rdd,
\end{equation}
where $C_i>0$ is the constant  of $T^{(i)}$ in \eqref{asterisco}, $ i=1,2$, whereas $C_0>0$ depends only on $s$ and on the Lipschitz constants of $\chi_1$ and $\chi_1^{-1}$.
\end{lemma}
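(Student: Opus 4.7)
The plan is to express the Gabor matrix of $T^{(1)}T^{(2)}$ as the integral transform of one individual Gabor matrix by the other, and then reduce the resulting double decay to a single decay centered at $\chi_1\circ\chi_2(w)$.

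First I would assume $\|g\|_2=1$ without loss of generality, and apply the Gabor composition formula \eqref{KGbM} with $P=T^{(1)}$ and $f=T^{(2)}\pi(w)g$. This produces
$$\la T^{(1)}T^{(2)}\pi(w)g,\pi(z)g\ra=\int_{\rdd}\la T^{(1)}\pi(u)g,\pi(z)g\ra\,\la T^{(2)}\pi(w)g,\pi(u)g\ra\,du.$$
Inserting the decay estimates \eqref{asterisco} for $T^{(1)}$ with exponent $s_1$ and for $T^{(2)}$ with exponent $s_2$ (both to be chosen below), the modulus is bounded by
$$C_1C_2\int_{\rdd}\la z-\chi_1(u)\ra^{-s_1}\,\la u-\chi_2(w)\ra^{-s_2}\,du.$$

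The second step reduces this integral to a constant multiple of $\la z-\chi_1\circ\chi_2(w)\ra^{-s}$. Setting $A:=z-\chi_1\circ\chi_2(w)$, I would change variables $v:=u-\chi_2(w)$ and, writing $B:=\chi_1(v+\chi_2(w))-\chi_1(\chi_2(w))$, use the Lipschitz property of $\chi_1$ with constant $L_1$ to get $|B|\leq L_1|v|$ and $z-\chi_1(u)=A-B$. Peetre's inequality $\la\xi\ra\leq\sqrt{2}\,\la\xi-\eta\ra\la\eta\ra$ then rearranges to
$$\la z-\chi_1(u)\ra^{-s_1}\leq 2^{s_1/2}(1+L_1)^{s_1}\,\la A\ra^{-s_1}\,\la v\ra^{s_1}.$$
Choosing $s_1=s$ and $s_2=s+N$ with $N>2d$—admissible because \eqref{asterisco} is assumed for every exponent—the integrand decouples and the integral collapses to a constant times $\la A\ra^{-s}\int_{\rdd}\la v\ra^{-N}dv<\infty$, which is exactly the claimed bound.

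The only subtle point is the bookkeeping of constants. Since the constant in \eqref{asterisco} depends on the exponent, $C_2$ in the statement is interpreted as the one associated with $s_2=s+N$ used above, while $C_0$ collects the Peetre factor $2^{s/2}$, the geometric factor $(1+L_1)^{s}$, and the convergent integral $\int\la v\ra^{-N}dv$. Strictly, the argument above already yields $C_0$ depending only on $s$, $d$, and $L_1$; the full Lipschitz data of $\chi_1^{-1}$ enters in the alternative (more symmetric) route where one instead changes variables $v':=\chi_1(u)$, whose Jacobian is bounded from above and below by those constants, thereby producing a standard convolution of Japanese brackets and the same final estimate, and this explains the form of the dependence of $C_0$ stated in the lemma.
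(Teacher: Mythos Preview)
Your argument is correct and follows the standard route: write the Gabor matrix of the composition as the integral of the two individual Gabor matrices via the reproducing formula \eqref{KGbM}/\eqref{treduetre}, then collapse the integral
\[
\int_{\rdd}\la z-\chi_1(u)\ra^{-s_1}\la u-\chi_2(w)\ra^{-s_2}\,du
\]
to a single bracket $\la z-\chi_1\circ\chi_2(w)\ra^{-s}$ using Peetre's inequality and the Lipschitz property of $\chi_1$. This is exactly the content of Lemma~2.11 in \cite{CNRrevmathp}, to which the paper simply refers; you have supplied the details that the paper omits.

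One small remark on the constants. Your first route (substitution $v=u-\chi_2(w)$) indeed uses only the Lipschitz constant of $\chi_1$, not of $\chi_1^{-1}$, so in that respect it is slightly sharper than the statement. The dependence on $\chi_1^{-1}$ in the paper reflects the alternative route you mention (substitution $u'=\chi_1(u)$, producing a genuine convolution of Japanese brackets), where the Jacobian bound requires the derivatives of $\chi_1^{-1}$; this is the version proved in \cite{CNRrevmathp}. Your observation that $C_2$ must be taken at the larger exponent $s+N$ rather than $s$ is correct and is the right reading of the statement, since \eqref{asterisco} is assumed for \emph{all} exponents and the constants there are exponent-dependent.
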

The proof is an easy consequence of Lemma 2.11 in \cite{CNRrevmathp}.

By induction we immediately obtain
\begin{proposition} For $n\in\bN$, $n\geq2$,  $T^{(i)}\in FIO(\chi_i)$,  $i=1,\dots,n$, we have
\begin{equation}\label{GMn}
|\langle T^{(1)} T^{(2)}\cdots  T^{(n)}\pi(w) g,\pi(z)g\rangle|\leq C_0C_1\cdots C_n \la z-\chi_1\circ\chi_2\circ\cdots \circ\chi_{n}(w)\ra^{-s}.
\end{equation}
where $C_0$ depends on $s$ and on the Lipschitz constants of the mappings: $$\chi_1,\chi^{-1}_1,\chi_1\circ\chi_2,(\chi_1\circ\chi_2)^{-1},\dots,\chi_1\circ\chi_2\circ\cdots\circ\chi_{n-1},(\chi_1\circ\chi_2\circ\cdots\circ\chi_{n-1})^{-1}.$$
\end{proposition}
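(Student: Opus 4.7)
The argument is a straightforward induction on $n\geq 2$, using the preceding Lemma as the engine and the algebra property (ii) to keep each partial product inside some class $FIO(\chi)$. For the base case $n=2$, \eqref{GMn} is precisely \eqref{controllo1}, and the dependence of $C_0$ on $s$ and on the Lipschitz constants of $\chi_1,\chi_1^{-1}$ is exactly what that Lemma provides.

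For the inductive step, suppose \eqref{GMn} has been established for products of $n-1$ factors. Writing $S:=T^{(1)}\cdots T^{(n-1)}$, property (ii) applied inductively gives $S\in FIO(\chi_1\circ\cdots\circ\chi_{n-1})$, and the induction hypothesis says that an admissible constant for $S$ in \eqref{asterisco} is $\widetilde{C}_0\, C_1\cdots C_{n-1}$, with $\widetilde{C}_0$ controlled by $s$ and by the Lipschitz constants of
\[
\chi_1,\chi_1^{-1},\ldots,\chi_1\circ\cdots\circ\chi_{n-2},(\chi_1\circ\cdots\circ\chi_{n-2})^{-1}.
\]
I then apply the preceding Lemma to the pair $(S,T^{(n)})$, whose underlying symplectic maps are $\chi_1\circ\cdots\circ\chi_{n-1}$ and $\chi_n$: this produces
\[
|\la S\,T^{(n)}\pi(w)g,\pi(z)g\ra|\leq \widehat{C}_0\,\bigl(\widetilde{C}_0 C_1\cdots C_{n-1}\bigr)\,C_n\,\la z-\chi_1\circ\cdots\circ\chi_n(w)\ra^{-s},
\]
where $\widehat{C}_0$ depends only on $s$ and on the Lipschitz constants of $\chi_1\circ\cdots\circ\chi_{n-1}$ and $(\chi_1\circ\cdots\circ\chi_{n-1})^{-1}$. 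Setting $C_0:=\widetilde{C}_0\widehat{C}_0$ gives \eqref{GMn} with precisely the constant dependence asserted.

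The only real obstacle is bookkeeping rather than analysis: at each step $k$, when we combine the already-merged block $S_k=T^{(1)}\cdots T^{(k)}$ with $T^{(k+1)}$, the Lemma contributes exactly the Lipschitz data of $\chi_1\circ\cdots\circ\chi_k$ and of its inverse, and nothing else. One has to check that, as $k$ runs from $1$ to $n-1$, these are precisely the mappings listed in the statement, so that no dependence on an individual intermediate factor $\chi_j$ or $\chi_j^{-1}$ (for $1<j\leq n$) sneaks in that is not already covered. Once this is verified, the proposition follows immediately.
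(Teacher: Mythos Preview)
Your proof is correct and matches the paper's own argument: the paper simply writes ``By induction we immediately obtain'' the proposition from the preceding Lemma, which is exactly the scheme you carry out. Your tracking of which Lipschitz data enter at each stage is accurate and confirms that the accumulated constant depends precisely on the maps $\chi_1\circ\cdots\circ\chi_k$ and their inverses for $k=1,\dots,n-1$, as stated.
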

\medskip

Observe that, using Schur's test and the same techniques as in the proof \cite[Theorem 3.4]{Wiener}, it is straightforward to obtain the following
weighted version of \cite[Theorem 3.4]{Wiener}.
\begin{theorem}\label{contmp} Consider
$\mu\in\cM_{v_r}$. Then for every
$1\leq p\leq\infty$, $T\in FIO(\chi)$ extends
to a continuous operator from
${M}^p_{\mu\circ\chi}$ into
${M}^p_{\mu}$.
\end{theorem}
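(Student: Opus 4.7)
The plan is to combine the Gabor matrix representation of $T$ with the $L^p$ version of Schur's test, inserting the weight $\mu$ via its moderation inequality. By the STFT inversion formula \eqref{treduetre}, the desired estimate $\|Tf\|_{M^p_\mu}\lesssim\|f\|_{M^p_{\mu\circ\chi}}$ is equivalent to $\|V_g(Tf)\|_{L^p_\mu}\lesssim\|V_g f\|_{L^p_{\mu\circ\chi}}$. Using \eqref{KGbM} together with the defining bound \eqref{asterisco} of the class $FIO(\chi)$, for every $s>0$ one obtains the pointwise majorization
\[
|V_g(Tf)(z)|\leq C_s\int_{\rdd}\la z-\chi(w)\ra^{-s}|V_g f(w)|\,dw.
\]

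Next, I would absorb the weight ratio $\mu(z)/(\mu\circ\chi)(w)$ into a polynomial factor. Since $\mu\in\cM_{v_r}$, applying the moderation inequality with displacement $u=z-\chi(w)$ yields
\[
\mu(z)=\mu(\chi(w)+u)\leq C\,\la z-\chi(w)\ra^{r}\,(\mu\circ\chi)(w).
\]
Multiplying the previous display by $\mu(z)$ gives
\[
\mu(z)|V_g(Tf)(z)|\leq C\int_{\rdd}\la z-\chi(w)\ra^{r-s}\,(\mu\circ\chi)(w)|V_g f(w)|\,dw,
\]
i.e.\ an integral operator with kernel $K(w,z):=\la z-\chi(w)\ra^{r-s}$ acting on $(\mu\circ\chi)(w)|V_g f(w)|$.

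The final step is Schur's test on $L^p(\rdd)$, $1\leq p\leq\infty$, which requires only uniform control of $\sup_z\int K(w,z)\,dw$ and $\sup_w\int K(w,z)\,dz$. The $dz$-integral is immediate by translation: $\int\la z-\chi(w)\ra^{r-s}\,dz=\int\la u\ra^{r-s}\,du<\infty$ whenever $s>r+2d$. For the $dw$-integral I would change variables $u=\chi(w)$. Here the hypothesis \eqref{B2} combined with $\chi$ being a bi-Lipschitz smooth canonical transformation guarantees that $D\chi$ and $D\chi^{-1}$ are globally bounded, hence the Jacobian $|\det D\chi^{-1}(u)|$ is uniformly bounded, reducing the integral again to $\int\la z-u\ra^{r-s}\,du<\infty$. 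Choosing any $s>r+2d$ closes the estimate for $f\in\cS(\rd)$, and the extension to $f\in M^p_{\mu\circ\chi}$ follows by density for $p<\infty$; for $p=\infty$ one defines $Tf$ directly through the weighted kernel integral plus \eqref{treduetre}, checking compatibility on Schwartz functions.

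The only delicate point I anticipate is the uniform bi-Lipschitz Jacobian estimate required in the change of variables. Fortunately this is supplied at no cost by \eqref{B2} together with the definition of a tame canonical transformation, so the entire argument proceeds as a weighted variant of \cite[Theorem 3.4]{Wiener} with no genuinely new analytic obstacle.
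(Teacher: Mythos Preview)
Your proposal is correct and follows precisely the route the paper indicates: a weighted variant of \cite[Theorem~3.4]{Wiener} obtained by inserting $\mu$ via the $v_r$-moderation inequality and then applying Schur's test to the kernel $\la z-\chi(w)\ra^{r-s}$. One small simplification you could exploit: since $\chi$ is a canonical transformation it has Jacobian determinant identically equal to $1$, so the change of variables in the $dw$-integral needs no appeal to the bi-Lipschitz Jacobian bound.
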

Let us underline  that $\mu\circ\chi\in\cM_{v_r}$, since
 $v_r\circ\chi\asymp v_r$,  due to the bi-Lipschitz property of $\chi$. In particular
 
 \begin{equation}\label{52bis}
    T: M^p_r(\rd)\to M^p_r(\rd), \quad 1\leq p\leq \infty, r\in \bR.
\end{equation}
If $\chi=\mathrm{Id}$,  then the corresponding Fourier integral operators are simply pseudodifferential operators, as already shown in \cite{GR}. \par
 The characterization below, written for \psdo s in the Kohn-Nirenberg form $\sigma(x,D)$, works  for any  $\tau$-form (in particular Weyl form $\sigma^w(x,D)$)  of a pseudodifferential operator.
 \begin{proposition}
   \label{charpsdo}
 Fix $g \in\cS(\rd)$ and let $\sigma \in \cS'(\rdd )$. For $s\in\bR$, the symbol $\sigma$ belongs to $S^0_{0,0}(\rdd )$ \fif\ for every $s\geq 0$, and for suitable constants $C=C_s$ depending on $s$,
 \begin{equation}
   \label{eq:kh9}
   |\langle \sigma(x,D) \pi (w) g , \pi (z) g\rangle | \leq C_s \la z-w\ra^{-s}, \qquad
   \forall \,w,z \in \rdd \, .
 \end{equation}
 \end{proposition}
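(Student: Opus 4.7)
The plan is to reduce the statement to the modulation space characterization of $S^0_{0,0}$ given in \eqref{HC} and then to establish an explicit pointwise identity between the Gabor matrix of $\sigma(x,D)$ and the short-time Fourier transform of its symbol. By \eqref{HC}, for any fixed window $\Phi\in\cS(\rdd)\setminus\{0\}$, the condition $\sigma\in S^0_{0,0}(\rdd)$ is equivalent to the existence, for every $s\geq 0$, of a constant $C_s>0$ such that
\begin{equation*}
|V_\Phi\sigma(X,\Xi)|\leq C_s\la\Xi\ra^{-s},\qquad\forall\,(X,\Xi)\in\rdd\times\rdd.
\end{equation*}
So the task is to show that \eqref{eq:kh9} is equivalent to such an STFT estimate for a suitably chosen fixed $\Phi$ built from $g$.

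The heart of the argument is a direct computation. Unfolding the definition $\sigma(x,D)\pi(w)g(x)=\intrd e^{2\pi i x\cdot\xi}\sigma(x,\xi)\widehat{\pi(w)g}(\xi)\,d\xi$, using the elementary identity $\widehat{\pi(w)g}(\xi)=e^{-2\pi i w_1\cdot(\xi-w_2)}\widehat g(\xi-w_2)$, pairing against $\overline{\pi(z)g}$, and making the change of variables centered at $(z_1,w_2)$, I would reorganize the phase factors to obtain a formula of the form
\begin{equation*}
\la\sigma(x,D)\pi(w)g,\pi(z)g\ra=e^{i\Psi(z,w)}\,V_\Phi\sigma\bigl(z_1,w_2;\,z_2-w_2,\,w_1-z_1\bigr),
\end{equation*}
where $\Psi$ is a real phase and $\Phi(u,v):=e^{-2\pi i u\cdot v}g(u)\overline{\widehat g(v)}$ is a Schwartz function on $\rdd$ independent of $z$ and $w$ (it is Schwartz since $g$ and $\overline{\widehat g}$ are, and the chirp $e^{-2\pi i u\cdot v}$ has polynomially bounded derivatives).

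To conclude, the linear map $(z,w)\mapsto\bigl((z_1,w_2),(z_2-w_2,w_1-z_1)\bigr)$ is a bijection of $\rdd\times\rdd$ onto itself, and its second (``frequency'') component has Euclidean norm exactly $|z-w|$. Consequently the family of bounds \eqref{eq:kh9} (one for each $s\geq 0$) is equivalent to the family $|V_\Phi\sigma(X,\Xi)|\leq C_s\la\Xi\ra^{-s}$ uniformly in $X$, which by the first step is equivalent to $\sigma\in S^0_{0,0}$.

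The main obstacle is the bookkeeping in the key identity: one must collect the phase factors arising from $\widehat{\pi(w)g}$, from $e^{2\pi i x\cdot\xi}$ in $\sigma(x,D)$, from $\overline{\pi(z)g}$ and from the recentering, and reorganize them so that the remaining integrand takes precisely the STFT form $\overline{\Phi(u-z_1,v-w_2)}\,e^{-2\pi i\bigl(u\cdot(z_2-w_2)+v\cdot(w_1-z_1)\bigr)}$, with $\Phi$ independent of $z,w$. For a general $\tau$-quantization the computation is analogous: the passage from the Kohn--Nirenberg symbol to the $\tau$-symbol amounts to composition with a Fourier multiplier of the form $e^{-2\pi i\tau\,\xi\cdot\eta}$, which can be absorbed into a modified Schwartz window $\Phi_\tau$ and into the phase $\Psi$, leaving the equivalence with \eqref{eq:kh9} unchanged.
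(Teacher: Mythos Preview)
Your argument is correct and is in fact the standard one: the key identity
\[
\la\sigma(x,D)\pi(w)g,\pi(z)g\ra=e^{i\Psi(z,w)}\,V_\Phi\sigma\bigl((z_1,w_2),(z_2-w_2,w_1-z_1)\bigr),
\qquad \Phi(u,v)=e^{-2\pi i u\cdot v}g(u)\overline{\hat g(v)},
\]
is exactly what underlies the result (you may wish to write out the phase bookkeeping in full, but the outcome you state is right, and the observation that the frequency variable has norm $|z-w|$ is the decisive point). The paper itself does not give a self-contained proof of this proposition: it records it as a known characterization, attributing it to \cite{GR} and observing that it is the special case $\chi=\mathrm{Id}$ of Theorem~\ref{caraI} (proved in \cite{Wiener}). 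Your direct approach via the explicit STFT identity is precisely the proof given in \cite{GR}, so there is no substantive difference in strategy---you have simply unpacked what the paper cites. One small remark: you should assume $g\not=0$ (so that $\Phi\not=0$ and can serve as a window); the proposition as stated omits this, but it is clearly intended.
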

 \par
 Similarly, under additional assumptions on  the classes $FIO(\chi)$, their operators can be written in the following integral form (\emph{FIOs of type I}):
 \begin{equation}\label{sei}
 I(\sigma,\Phi) f(x)=\int_{\rd} e^{2\pi i
  \Phi(x,\eta)}\sigma(x,\eta)\widehat{f}(\eta)\,d\eta,  \quad f\in\cS(\rd),
 \end{equation}
 where  $\sigma\in S^0_{0,0}(\rdd)$) and $\Phi$ a tame phase function. More precisely, this particular form is allowed starting from the class $FIO(\chi)$ whenever  the mapping $\chi$ enjoys the additional property  \eqref{detcond2?} as explained in the following characterization \cite[Theorem 4.3]{Wiener}.

\begin{theorem}\label{caraI}
Consider $g\in\cS(\rd)$.
Let $I$ be a continuous linear operator $\cS(\rd)\to\cS'(\rd)$ and
$\chi$ be a tame canonical transformation satisfying \eqref{detcond2?}.  Then the following properties are
equivalent. \par {\rm (i)} $I=I(\sigma,\Phi_\chi)$ is a FIO of type
I for some $\sigma\in S^0_{0,0}(\rdd)$. \par {\rm
(ii)} $I\in FIO(\chi)$.\par
\end{theorem}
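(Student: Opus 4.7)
My plan is to prove the two directions separately, using for $(\mathrm{i})\Rightarrow(\mathrm{ii})$ a direct oscillatory integral estimate, and for $(\mathrm{ii})\Rightarrow(\mathrm{i})$ the algebraic machinery (algebra property, Wiener property, and the already-cited characterization of $FIO(\mathrm{Id})$ as pseudodifferential operators with symbols in $S^0_{0,0}$) supplied by the properties listed above.

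For $(\mathrm{i})\Rightarrow(\mathrm{ii})$, I would plug the definitions into the Gabor matrix
\[
\langle I(\sigma,\Phi_\chi)\pi(w)g,\pi(z)g\rangle,\qquad w=(y,\eta),\ z=(x,\xi),
\]
and, using the explicit expression of $\widehat{\pi(w)g}$, write it as an oscillatory integral over $(x',\omega)\in\rdd$ with phase
\[
\Psi(x',\omega)=\Phi_\chi(x',\omega)-\xi\cdot x'-y\cdot\omega
\]
and an amplitude that is a Schwartz function in the variable $(x'-x,\omega-\eta)$ times $\sigma(x',\omega)\in S^0_{0,0}$. By \eqref{cantra2}, the critical point equations $\nabla_{x'}\Psi=\nabla_\omega\Psi=0$ are exactly the conditions $\chi(y,\omega)=(x',\xi)$; combined with the non-degeneracy \eqref{detmisto} and the bi-Lipschitz character of $\chi$, this yields the lower bound $|\nabla\Psi(x',\omega)|\gtrsim|(x',\omega)-(x_*,\omega_*)|+|z-\chi(w)|$ in a neighborhood of the critical point and globally a quantitative form of the same estimate. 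Then, splitting the integration and applying repeated integration by parts via the operator $L=(1+|\nabla\Psi|^2)^{-1}(1-\nabla\Psi\cdot\nabla)$ (or its localized version)—together with the decay of the window factors—produces the required bound $\langle z-\chi(w)\rangle^{-s}$ for arbitrary $s>0$.

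For $(\mathrm{ii})\Rightarrow(\mathrm{i})$, I would introduce the model operator $T_0:=I(1,\Phi_\chi)$. Applying the already-proved direction gives $T_0\in FIO(\chi)$, and the non-degeneracy \eqref{detcond2?} guarantees that $T_0$ is invertible on $L^2(\rd)$. By the Wiener property (iii), $T_0^{-1}\in FIO(\chi^{-1})$, and then the algebra property (ii) yields
\[
I\circ T_0^{-1}\in FIO(\chi\circ\chi^{-1})=FIO(\mathrm{Id}).
\]
By Proposition \ref{charpsdo}, $I\circ T_0^{-1}=b(x,D)$ with $b\in S^0_{0,0}(\rdd)$. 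Thus $I=b(x,D)\circ I(1,\Phi_\chi)$, and it only remains to check that the composition of a Kohn--Nirenberg pseudodifferential operator with a type I FIO of phase $\Phi_\chi$ and unit symbol is again a type I FIO with phase $\Phi_\chi$ and a symbol in $S^0_{0,0}$; this is a standard symbolic calculus computation using the non-degeneracy \eqref{detmisto} and the bounds \eqref{39bis}.

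The main obstacle I expect is the first step: converting the abstract critical-point/non-degeneracy information into the \emph{quantitative} lower bound $|\nabla_{x',\omega}\Psi|\gtrsim|z-\chi(w)|$ that makes the integration-by-parts machinery produce decay in $\langle z-\chi(w)\rangle$ rather than only in $|(x',\omega)-(x_*,\omega_*)|$. Once that is in place, everything else is either standard (stationary phase, Schwartz decay of window factors) or directly cited (Wiener/algebra properties, characterization of $FIO(\mathrm{Id})$).
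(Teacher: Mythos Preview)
The paper does not prove Theorem~\ref{caraI}; it is quoted from \cite[Theorem~4.3]{Wiener}. So strictly speaking there is no proof here to compare against, and I assess your outline on its own terms and against the argument in \cite{Wiener}.

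Your $(\mathrm{i})\Rightarrow(\mathrm{ii})$ is correct and is essentially the proof in \cite{Wiener} (see also \cite[Theorem~3.3]{locNC09}): one writes the Gabor matrix as an oscillatory integral with phase $\Psi(x',\omega)=\Phi_\chi(x',\omega)-\xi\cdot x'-y\cdot\omega$, with amplitude localized by the Schwartz windows near $(x,\eta)$, and integrates by parts. The quantitative estimate you flag as the main obstacle, namely $|\nabla_{x',\omega}\Psi|\gtrsim\langle z-\chi(w)\rangle$ on the effective support of the amplitude, is indeed the heart of the matter; it follows from \eqref{phasedecay}, \eqref{detcond} and the bi-Lipschitz property of $\chi$.

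Your $(\mathrm{ii})\Rightarrow(\mathrm{i})$ is a different route from \cite{Wiener}, where the symbol $\sigma$ is built \emph{directly}: one recovers the Schwartz kernel $K_I$ from the Gabor matrix via the inversion formula \eqref{treduetre}, sets $\sigma(x,\eta)=\int e^{-2\pi i[\Phi_\chi(x,\eta)-y\eta]}K_I(x,y)\,dy$ (suitably interpreted), and verifies $\sigma\in S^0_{0,0}$ from the decay \eqref{asterisco}. Your reduction via the model operator $T_0=I(1,\Phi_\chi)$ is neat but contains a real gap: you assert that $T_0$ is invertible on $L^2(\rd)$ ``because of the non-degeneracy \eqref{detcond2?}'', and this is not a free consequence. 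Non-degeneracy yields $L^2$-boundedness (Asada--Fujiwara) and, via the composition calculus, that $T_0^\ast T_0$ is a pseudodifferential operator with symbol in $S^0_{0,0}$ built from $|\det\partial^2_{x,\eta}\Phi_\chi|^{-1}$; but in the $S^0_{0,0}$ calculus there is no order filtration, so a lower bound on that ``leading'' piece does not by itself give invertibility of the full operator. You must supply a separate argument---for instance an explicit parametrix of type~II together with an operator-norm estimate on the remainder, or a citation of a result giving $L^2$-invertibility of type~I FIOs with tame non-degenerate phase and non-vanishing amplitude. Until that is done the Wiener property~(iii) cannot be invoked and the reduction stalls. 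The direct construction in \cite{Wiener} sidesteps this issue entirely.
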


For $\chi={\rm Id}$ we recapture the characterization for \psdo s of Proposition \ref{charpsdo}.

\begin{remark}\label{3.7}
We shall apply the preceding  results to the mappings $\chi_t(x,\eta)$ coming from the Hamilton-Jacobi system \eqref{HS}, so we need to be more precise on the estimate \eqref{asterisco}, namely we have to show how  the constants $C$  depend on the time variable $t$.
To this end, in short: gluing together the results \cite[Theorem 3.3]{locNC09} and \cite[Theorem 4.3]{Wiener}, we obtain that the constants $C=C_s$ in \eqref{asterisco}, $s\geq 0$, can be estimated in terms of the $S^0_{0,0}$-semi-norms of $\sigma$. Provided the Lipschitz constants of $\chi_t$ are uniformly bounded with respect to $t$, as we have in  \eqref{rel-chi-phi}, we may assume $C_s(t)\in \cC(]-T,T[)$ in \eqref{asterisco} if the semi-norms of $\sigma_t$ can be estimated by functions in 
$\cC(]-T,T[)$.
\end{remark}

\section{Proof of Theorems \ref{T1.1} and \ref{T1.2new}}

Let us consider the Cauchy problem \eqref{C1intro} with $a(z)$ real-valued satisfying \eqref{symbdec}. From the results recalled in the Introduction we have for $e^{i t H}$ the representation \eqref{FIO1}. More precisely:

\begin{proposition}\label{Prop31}
There exists a constant $T>0$, a symbol $\sigma(t,x,\eta)\in \cC^\infty(]-T,T[,S^{0}_{0,0})$ a real-valued phase function $\Phi$ satisfying
\eqref{eiconal} and \eqref{detmisto} such that the evolution operator can be written as
 \begin{equation}\label{soluzioneA}
 (e^{i t H}u_0)(t,x)=(F_tu_0)(t,x)
 \end{equation}
 where $F_t$ is the FIO of type I
 \begin{equation}\label{FIO1bis}
 (F_tu_0)(t,x)=\intrd e^{2\pi i\Phi(t,x,\eta)} \sigma(t,x,\eta){\widehat
  {u_0}}(\eta)d\eta.
  \end{equation}
\end{proposition}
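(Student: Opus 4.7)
My plan is to follow the classical Fujiwara/Asada--Fujiwara construction of short-time parametrices, adapted to the $S^0_{0,0}$ framework as in the references \cite{asada-fuji,Fujiwara79,Tataru04} already cited in the Introduction. The argument splits into three stages: construction of the phase, construction of the symbol, and verification that the operator $F_t$ so obtained really coincides with $e^{itH}$.

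\medskip

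\textbf{Step 1 (Phase).} I would first solve the eiconal equation \eqref{eiconal} via Hamilton--Jacobi theory. Integrating the Hamilton system \eqref{HS} and setting $\Phi(0,x,\eta)=x\eta$, one obtains a candidate phase through the classical formula $\Phi(t,x,\eta) = x\eta + \int_0^t \bigl(\xi(s,y,\eta)\cdot \dot x(s,y,\eta) - \tfrac{1}{2\pi}a(x(s,y,\eta),\xi(s,y,\eta))\bigr)\,ds$, where $y=y(t,x,\eta)$ is obtained by inverting the first component of $\chi_t$ with respect to $y$. At $t=0$ we have $\partial_{x,\eta}^2\Phi = I$, hence by continuity and the implicit function theorem the inversion and the nondegeneracy \eqref{detmisto} persist on a small interval $(-T,T)$. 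The bound \eqref{39bis} on the second and higher order derivatives then follows from iterating the ODE \eqref{HS}, using that $|\partial^\alpha a|\leq C_\alpha$ for $|\alpha|\geq 2$ controls how derivatives of the flow of order $\geq 2$ in $(y,\eta)$ remain uniformly bounded on $(-T,T)\times\rdd$.

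\medskip

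\textbf{Step 2 (Symbol).} With $\Phi$ at hand, I would insert the ansatz \eqref{FIO1bis} into the Schr\"odinger equation \eqref{C1intro}. A direct computation shows that $(i\partial_t + a(x,D))F_t u_0$ has again the form of a type I FIO with phase $\Phi$ and a new amplitude $L(\sigma)$, where $L$ is an evolution operator built from $a$, its derivatives and derivatives of $\Phi$; the eiconal equation is precisely what kills the highest order contribution in $L$. One is thus reduced to solving
\begin{equation*}
i\partial_t \sigma + L(\sigma) = 0, \qquad \sigma(0,x,\eta)=1.
\end{equation*}
Because $S^0_{0,0}$ symbols gain no decay under differentiation, the usual asymptotic-expansion construction is ineffective, so I would instead solve this auxiliary Cauchy problem directly by Picard iteration, after possibly shrinking $T$. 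The key point is that $L$ maps $S^0_{0,0}$ into itself with norms controlled by the $S^0_{0,0}$-seminorms of $\sigma$ and by the bounds \eqref{39bis}; hence a Banach fixed-point argument in the Banach algebra $S^0_{0,0}$ (equipped with any fixed seminorm) yields a solution $\sigma\in \mathcal C^\infty(]-T,T[,S^0_{0,0})$. Observe that the resulting $\sigma$ can be assumed bounded in all $S^0_{0,0}$-seminorms by continuous functions of $t$, which matches the uniformity needed in Remark \ref{3.7}.

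\medskip

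\textbf{Step 3 (Identification).} To conclude that $F_t = e^{itH}$, I would verify that $F_t$ and $e^{itH}$ satisfy the same Cauchy problem in $\cS(\rd)$. By construction $F_0=\mathrm{Id}$ on $\cS(\rd)$, and by Step~2 the operator $F_t$ solves $(i\partial_t+H)F_t = 0$ in $\cS'(\rd)$ when applied to Schwartz data. The well-posedness of \eqref{C1intro} in $\cS(\rd)$, recalled in the Introduction, then gives $F_t u_0 = e^{itH}u_0$ for every $u_0\in\cS(\rd)$, and by density the identity \eqref{soluzioneA} holds on all the spaces we shall need.

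\medskip

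\textbf{Main obstacle.} The delicate point is Step~2: unlike the polyhomogeneous case treated in \cite{Chazarain,helfferRobert80}, the Kohn--Nirenberg scale $S^0_{0,0}$ admits no hierarchy of lower-order terms, so the symbol must be produced \emph{exactly}, not modulo a smoothing remainder. This forces us to replace the traditional transport-equation-plus-Borel-summation procedure by a fixed-point scheme in $S^0_{0,0}$, whose viability rests on carefully controlling how the operator $L$ built out of $\Phi$ and $a$ acts on this non-classical symbol class with constants uniform in $t\in(-T,T)$.
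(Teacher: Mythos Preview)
The paper gives no proof of this proposition: it is stated as a more precise form of the representation \eqref{FIO1}, which is simply quoted from the references \cite{asada-fuji,wiener3,wiener4,JMP2014,Fujiwara79,gz,Tataru04,Tataru08,Parmegg88,Treves} listed in the Introduction. Your outline therefore goes well beyond what the paper itself offers, and Steps~1 and~3 correctly reproduce the classical scheme of those references.

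Step~2, however, has a genuine gap. After the eiconal term is removed, $L$ still contains the first-order transport part $\frac{1}{2\pi i}\,\nabla_\xi a(x,\nabla_x\Phi)\cdot\nabla_x$. Under \eqref{symbdec} only the derivatives $\partial^\alpha a$ with $|\alpha|\ge 2$ are bounded, so the vector field $\nabla_\xi a(x,\nabla_x\Phi)$ is merely Lipschitz and in general grows linearly in $(x,\eta)$; already for the free particle $a(\xi)=|\xi|^2$ it equals $2\nabla_x\Phi=2\eta+O(t)$. Hence $L$ does \emph{not} carry $S^0_{0,0}$ into itself, and a contraction argument ``in the Banach algebra $S^0_{0,0}$ equipped with any fixed seminorm'' cannot close: starting from $\sigma_0=1$, the second Picard iterate already acquires linear growth through the term $V\cdot\nabla_x\sigma_1$. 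Shrinking $T$ does not help, since the obstruction is spatial growth of the coefficient, not size in time. The remedy used in the cited works is to absorb the transport term by passing to bicharacteristic coordinates, i.e.\ set $\tilde\sigma(t,y,\eta)=\sigma(t,x(t,y,\eta),\eta)$ with $x(t,y,\eta)$ the first component of $\chi_t$; the resulting equation for $\tilde\sigma$ has coefficients built only from $\partial^\alpha a$ with $|\alpha|\ge 2$ and $\partial^\beta\Phi$ with $|\beta|\ge 2$, all bounded by \eqref{symbdec} and \eqref{39bis}, and for \emph{that} equation a Volterra/Picard iteration in $\cC(]-T,T[,S^0_{0,0})$ does converge. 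Without this straightening step your fixed-point scheme fails as written.
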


\begin{remark}\label{rnew}
Notice that the function $\Phi(t,\cdot)$ of Proposition \ref{Prop31} and the related canonical transformation $\chi_t$ in \eqref{mappachi} are tame. The Lipschitz constants of $\chi_t$ and $\chi_t^{-1}$  can be controlled by a continuous function of $t$ on the interval $]-T,T[$ and thus can be chosen uniform with respect to $t$ on $]-T,T[$.\par
\end{remark}
From Theorem \ref{caraI} we infer that the propagator $e^{it H}$ belongs to $\fiola$ for every fixed $t\in ]-T,T[$. More precisely:
\begin{proposition}\label{cor31}
Under the assumptions of Proposition \ref{Prop31} we have \begin{equation}
\label{T1}
e^{i t H}\in FIO(\chi_t),\quad t\in]-T,T[,\end{equation}
where  $\chi_t$ is defined in \eqref{mappachi}. Moreover for every $s\geq0$ there exists $C(t)=C_s(t)\in \cC(]-T,T[)$ such that, for every $g\in\cS(\rd)$ the Gabor matrix satisfies
\begin{equation}\label{GM1}
|\la e^{i t H} \pi(w)g, \pi(z)g\ra|\leq C(t)  \la z-\chi_t(w)\ra^{-s},\quad w,z\in\rdd.
\end{equation}
\end{proposition}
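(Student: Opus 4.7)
The strategy is to combine the three ingredients assembled in the previous subsection: the type~I representation of the propagator furnished by Proposition \ref{Prop31}, the characterization Theorem \ref{caraI} of $FIO(\chi)$ as type~I FIOs when $\chi$ is tame and satisfies \eqref{detcond2?}, and the remark on continuous dependence of constants in Remark \ref{3.7}.

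First, I would verify that the canonical transformation $\chi_t$ coming from \eqref{HS} meets the hypotheses of Theorem \ref{caraI}: by the discussion in Subsection \ref{2.3} (especially \eqref{B2}, \eqref{detcond2} and Remark \ref{rnew}), for $t\in ]-T,T[$ the map $\chi_t$ is a tame canonical transformation enjoying the Jacobian lower bound \eqref{detcond2?}, with Lipschitz constants of $\chi_t$ and $\chi_t^{-1}$ controlled by a continuous function of $t$ on $]-T,T[$. Next, Proposition \ref{Prop31} gives $e^{itH}=F_t=I(\sigma(t,\cdot),\Phi(t,\cdot))$ with $\sigma(t,\cdot)\in S^0_{0,0}$ and $\Phi(t,\cdot)$ the tame phase associated to $\chi_t$ by \eqref{rel-chi-phi}. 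Invoking Theorem \ref{caraI} with $\chi=\chi_t$ at each fixed $t\in ]-T,T[$ yields \eqref{T1}, and in particular the qualitative decay \eqref{asterisco} for $e^{itH}$ with some constant $C_s(t)$.

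It remains to upgrade the constant $C_s(t)$ to a continuous function of $t$, which is the estimate \eqref{GM1}. This is precisely what Remark \ref{3.7} is set up to deliver: tracking the proof of Theorem \ref{caraI} (in turn based on \cite[Theorem 3.3]{locNC09} and \cite[Theorem 4.3]{Wiener}), one sees that $C_s(t)$ can be bounded by a finite combination of $S^0_{0,0}$-seminorms of $\sigma(t,\cdot)$ multiplied by a factor depending only on $s$ and on the Lipschitz constants of $\chi_t$ and $\chi_t^{-1}$. Since $\sigma\in\cC^\infty(]-T,T[,S^0_{0,0})$, each such seminorm is a continuous function of $t$ on $]-T,T[$, and by Remark \ref{rnew} the Lipschitz constants are likewise continuous (in fact locally bounded). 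Therefore $C_s(t)\in\cC(]-T,T[)$, establishing \eqref{GM1}.

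The only delicate point is the bookkeeping in the last step: one must make sure that the dependence of $C_s$ on $\sigma$ extracted from the proofs of \cite[Theorem 3.3]{locNC09} and \cite[Theorem 4.3]{Wiener} is indeed through a \emph{finite} number of $S^0_{0,0}$-seminorms (so that continuity in $t$ is preserved), and that the constants arising from the non-degeneracy \eqref{detcond} and the Jacobian bound \eqref{detcond2?} can likewise be chosen uniform on compact subintervals. Both facts are essentially already built into the cited statements; no new computation is required, and the argument reduces to assembling the ingredients above.
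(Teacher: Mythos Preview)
Your proposal is correct and follows exactly the paper's own argument: the paper deduces \eqref{T1} directly from Theorem \ref{caraI} applied to the type~I representation of Proposition \ref{Prop31}, and then obtains the continuity of $C_s(t)$ from Remark \ref{rnew} and Remark \ref{3.7}. There is no difference in approach, and your bookkeeping remarks about finitely many seminorms and uniform non-degeneracy constants are precisely the content of those two remarks.
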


The last part of the statement follows from Remark \ref{rnew} and Remark \ref{3.7}.

The previous proposition gives a representation  of $e^{it H}$ for $|t|<T$. Using the group property of the propagator $e^{it H}$  we may obtain an expression of $e^{it H}$ for every $t\in\bR$.
Indeed, a classical trick, jointly with the group property of   $e^{it H}$, applies. Namely, we consider $T_0<T/2$ and define
$$I_h=]h T_0, (h+2)T_0[, \quad h\in \bZ.
$$
For $t\in I_h$, by the group property of  $e^{it H}$:
\begin{equation}\label{eita}
e^{it H}=e^{i(t-hT_0)H}(e^{i(hT_0) H/|h|})^{|h|}
\end{equation}
and using Proposition \ref{Prop31}, one can write
\begin{equation}\label{eita2}e^{it H}=F_{t-hT_0}(F_{\frac{h}{|h|}T_0})^{|h|}.
\end{equation}
In general, $e^{it H}$ or even the composition  $F_{t-hT_0}(F_{\frac{h}{|h|}T_0})^{|h|}$ cannot be represented as a type I FIO in the form \eqref{FIO1}.  We shall prove below that
the evolution $e^{it H}$ is in the class  $FIO(\chi_t)$ for every $t\in\bR$, with $\chi$  defined in \eqref{mappachi}, so that this class is proven to be the right framework for describing the evolution  $e^{it H}$.

\begin{theorem}\label{UNCP}
Given the  Cauchy problem \eqref{C1intro} with $a(z)$ real valued satisfying \eqref{symbdec}, consider the mapping   $\chi_t$ defined in \eqref{mappachi}.
 Then  
 \begin{equation}\label{A1}e^{it H}\in  FIO(\chi_t),\quad t\in\bR\end{equation}
 and for every $s\geq 0$ there exists $C(t)\in \cC(\bR)$ such that
 \begin{equation}\label{GM2}
 |\la e^{i t H} \pi(w)g, \pi(z)g\ra|\leq C(t)  \la z-\chi_t(w)\ra^{-s},\quad w,z\in\rdd,\quad t\in\bR.
 \end{equation}
\end{theorem}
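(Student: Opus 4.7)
The plan is to reduce Theorem~\ref{UNCP} to Proposition~\ref{cor31} via the group property of $e^{itH}$ and the iterated composition formula \eqref{GMn} for the classes $FIO(\chi)$, combined with the group property \eqref{prodotto} of the Hamiltonian flow.

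First, I would fix $T_0<T/2$ so that $|\pm T_0|<T$ and, for $t\in I_h=\,]hT_0,(h+2)T_0[$ with $h\in\bZ$, also $|t-hT_0|<2T_0<T$. The group law for the propagator already recalled in \eqref{eita}--\eqref{eita2} expresses
\begin{equation*}
e^{itH}=e^{i(t-hT_0)H}\bigl(e^{i\,\mathrm{sgn}(h)T_0 H}\bigr)^{|h|},
\end{equation*}
and by Proposition~\ref{cor31} every factor lies in a short-time FIO class: $e^{i(t-hT_0)H}\in FIO(\chi_{t-hT_0})$ and $e^{i\,\mathrm{sgn}(h)T_0 H}\in FIO(\chi_{\mathrm{sgn}(h)T_0})$, with Gabor-matrix constants depending continuously on the time parameter on $]-T,T[$.

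Next, I would apply the iterated composition estimate \eqref{GMn} to this $(|h|+1)$-fold product, which yields
\begin{equation*}
|\langle e^{itH}\pi(w)g,\pi(z)g\rangle|\leq C(t)\,\la z-\chi_{t-hT_0}\circ\chi_{\mathrm{sgn}(h)T_0}^{\,|h|}(w)\ra^{-s}.
\end{equation*}
The group property \eqref{prodotto} of the Hamiltonian flow collapses the composition of canonical transformations to the single map $\chi_t$, yielding both the inclusion \eqref{A1} and the estimate \eqref{GM2} on each interval $I_h$. Since $\{I_h\}_{h\in\bZ}$ covers $\bR$, this proves the statement pointwise in $t$.

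The main obstacle is arranging $C(t)\in\cC(\bR)$, since the number $|h|+1$ of factors in the decomposition grows with $|t|$. By \eqref{GMn}, $C(t)=C_0\,C_{t-hT_0}\,(C_{\mathrm{sgn}(h)T_0})^{|h|}$, where $C_0$ depends only on $s$ and on the Lipschitz constants of the partial compositions $\chi_{\mathrm{sgn}(h)T_0},\chi_{\mathrm{sgn}(h)2T_0},\ldots,\chi_t$ and their inverses. On a bounded neighbourhood of any fixed $t_0\in\bR$, however, $h$ takes only finitely many values, so only finitely many such Lipschitz constants enter; by Remark~\ref{rnew} and \eqref{B2} each is finite, and continuity in $t$ of the individual factors $C_i$ from Proposition~\ref{cor31} transfers to $C(t)$ on the interior of each $I_h$. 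On an overlap $I_h\cap I_{h+1}$ either representation is available and produces its own continuous bound; taking the pointwise minimum furnishes a single $C(t)\in\cC(\bR)$, completing the argument.
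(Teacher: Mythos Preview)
Your proof is correct and follows essentially the same route as the paper: decompose $e^{itH}$ via the group law \eqref{eita}--\eqref{eita2}, apply Proposition~\ref{cor31} to each short-time factor, use the composition estimate \eqref{GMn} together with the flow property \eqref{prodotto} to obtain \eqref{GM2} on each $I_h$, and then glue the local continuous bounds $C_h(t)$ into a global $C(t)\in\cC(\bR)$. The paper simply asserts that last gluing step is ``easy'' without further detail; your argument via overlaps is in the same spirit (though taking the pointwise \emph{maximum} rather than the minimum would avoid any worry about discontinuities where a new representation first becomes available).
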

\begin{proof} We fix $T_0<T/2$ as above. For $t\in\bR$, there exists  $h\in\bZ$ such that $t\in I_h$. Using Proposition \ref{cor31} for $t_1=t-hT_0\in ]-T,T[$ we have that $e^{it_1H}\in FIO(\chi_{t_1})$ and for $t_2=\frac{h}{|h|}T_0 \in ]-T,T[$,
$e^{it_2 H}\in FIO(\chi_{t_2})$, and for every $s\geq 0$, there exists a continuous function $C(t)$ on $]-T,T[$ such that \eqref{GM1} is satisfied for $t=t_1$ and $t=t_2$. Using the algebra property \eqref{algebra}, we have
$$e^{it_1H}(e^{it_2H})^{|h|}\in FIO(\chi_{t_1}\circ(\chi_{t_2})^{|h|})$$
and  the group law \eqref{prodotto} for $\chi_t(y,\eta)=S_0(t)(y,\eta)$ gives
$$
\chi_{t_1}\circ(\chi_{t_2})^{|h|}=\chi_{t_1+|h| t_2}=\chi_{t},
$$
as expected. Then, using \eqref{GMn} we obtain that the Gabor matrix of the product $e^{it_1H}(e^{it_2H})^{|h|}$ is controlled by a continuous function $C_h(t) $ on $I_h$. Finally, from  the estimates
$$ |\la e^{it H} \pi(w)g, \pi(z)g\ra|\leq C_h(t) \la z-\chi_t(w)\ra^{-s},\quad t\in I_h,$$
with $C_h\in\cC(I_h)$, it is easy to construct a new continuous controlling function $C(t)$ on $\bR$ such that \eqref{GM2} is satisfied.
\end{proof}

In particular,  the estimate \eqref{GM2} gives Theorem \ref{T1.1}.  Using Theorem \ref{contmp} we obtain Theorem \ref{T1.2new}.

\section{Gabor singularities and proof of Theorem \ref{T1.3new}}

We want now to localize in $\rdd$ the Gabor singularities of a distribution and study the action on them of $e^{itH}$.\par
For $\Gamma\subset\rdd$ we define the $\delta$-neighborhood $\Gamma_\delta$, $0<\delta<1$, as 
\begin{equation}\label{7.1}
\Gamma_\delta=\{z\in\rdd:\ |z-z_0|<\delta\langle z_0\rangle\ \textrm{for some}\ z_0\in\Gamma\}.
\end{equation}
We begin to list some properties of the $\delta$-neighborhoods, for the proofs we refer to \cite[Lemmas 7.1, 7.2]{CNRanalitico2}.
\begin{lemma}\label{lemma7.1}
Given $\delta$, we can find $\delta^\ast$, $0<\delta^\ast<\delta$, such that for every $\Gamma\subset\rdd$
\begin{equation}\label{eq7.2}
\big(\Gamma_{\delta^\ast}\big)_{\delta^\ast}\subset\Gamma_{\delta},
\end{equation}
\begin{equation}\label{eq7.3}
\big(\rdd\setminus\Gamma_{\delta}\big)_{\delta^\ast}\subset \rdd\setminus\Gamma_{\delta^\ast}.
\end{equation}
\end{lemma}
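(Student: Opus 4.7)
The plan is to exploit two elementary features of the weight $\langle z\rangle=(1+|z|^2)^{1/2}$, namely that it is $1$-Lipschitz on $\rdd$ and satisfies the sharp triangle-type inequality $\langle z\rangle\leq\langle z_0\rangle+|z-z_0|$. Both inclusions will then follow from the triangle inequality applied to suitable intermediate points, after choosing $\delta^\ast$ a small fraction of $\delta$ (the computation below suggests $\delta^\ast=\delta/3$ is enough). Once $\delta^\ast$ is fixed this way, the two inclusions can be verified in a few lines each, and the main task is really just to make the constants work uniformly in $\Gamma\subset\rdd$.

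For \eqref{eq7.2} I would pick $z\in(\Gamma_{\delta^\ast})_{\delta^\ast}$, unpack the definition to get $z_1\in\Gamma_{\delta^\ast}$ with $|z-z_1|<\delta^\ast\langle z_1\rangle$ and then $z_0\in\Gamma$ with $|z_1-z_0|<\delta^\ast\langle z_0\rangle$. The key estimate is
\[
\langle z_1\rangle\leq \langle z_0\rangle+|z_1-z_0|<(1+\delta^\ast)\langle z_0\rangle,
\]
coming from the Lipschitz property. Substituting into the triangle inequality
\[
|z-z_0|\leq |z-z_1|+|z_1-z_0|<\delta^\ast\langle z_1\rangle+\delta^\ast\langle z_0\rangle
\]
yields $|z-z_0|<\delta^\ast(2+\delta^\ast)\langle z_0\rangle$, so \eqref{eq7.2} holds provided $\delta^\ast(2+\delta^\ast)\leq \delta$. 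Since $\delta^\ast<1$, the choice $\delta^\ast\leq \delta/3$ suffices.

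For \eqref{eq7.3} I would argue by contrapositive: take $z\in(\rdd\setminus\Gamma_\delta)_{\delta^\ast}$, so there exists $z_1\notin\Gamma_\delta$ with $|z-z_1|<\delta^\ast\langle z_1\rangle$, and assume for contradiction that $z\in\Gamma_{\delta^\ast}$, giving $z_0\in\Gamma$ with $|z-z_0|<\delta^\ast\langle z_0\rangle$. Now the Lipschitz bound runs in the other direction: from $\langle z\rangle\leq(1+\delta^\ast)\langle z_0\rangle$ and $\langle z_1\rangle\leq\langle z\rangle+|z_1-z|<\langle z\rangle+\delta^\ast\langle z_1\rangle$, we obtain
\[
\langle z_1\rangle<\frac{1+\delta^\ast}{1-\delta^\ast}\langle z_0\rangle,
\]
which plugged into $|z_1-z_0|\leq |z_1-z|+|z-z_0|<\delta^\ast\langle z_1\rangle+\delta^\ast\langle z_0\rangle$ gives $|z_1-z_0|<\frac{2\delta^\ast}{1-\delta^\ast}\langle z_0\rangle$. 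Contradiction with $z_1\notin\Gamma_\delta$ follows as soon as $\frac{2\delta^\ast}{1-\delta^\ast}\leq \delta$, equivalently $\delta^\ast\leq\delta/(2+\delta)$, which is again implied by $\delta^\ast\leq \delta/3$.

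The only real subtlety, and the step I expect to deserve a careful check, is the reversed Lipschitz bound used for \eqref{eq7.3}: naively one would estimate $\langle z_1\rangle$ from $\langle z_0\rangle$ by going directly through $|z_1-z_0|$, but that quantity is precisely what we want to control, so the argument would be circular. Routing instead through $z$ (where $|z-z_0|$ is tiny by hypothesis and $|z_1-z|$ is tiny relative to $\langle z_1\rangle$) gives a genuine inequality $(1-\delta^\ast)\langle z_1\rangle<(1+\delta^\ast)\langle z_0\rangle$, hence the need for $\delta^\ast<1$. With the single choice $\delta^\ast=\delta/3$ (and recalling $0<\delta<1$) both \eqref{eq7.2} and \eqref{eq7.3} then hold for every $\Gamma\subset\rdd$, completing the proof.
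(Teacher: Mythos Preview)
Your argument is correct. Both inclusions follow cleanly from the $1$-Lipschitz property of $\langle\cdot\rangle$ and the triangle inequality, and your explicit choice $\delta^\ast=\delta/3$ indeed satisfies both constraints $\delta^\ast(2+\delta^\ast)\leq\delta$ and $2\delta^\ast/(1-\delta^\ast)\leq\delta$ when $0<\delta<1$. The handling of the ``reversed'' estimate in \eqref{eq7.3}, routing through $z$ rather than directly from $z_0$ to $z_1$, is exactly the right move to avoid circularity.

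As for comparison: the paper does not actually prove this lemma in the text---it states the result and refers the reader to \cite[Lemmas 7.1, 7.2]{CNRanalitico2} for the proofs. Your elementary computation is precisely the kind of argument one expects there (there is essentially only one natural way to prove such a statement), so there is no meaningful methodological difference to report.
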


\begin{lemma}\label{lemma7.2}
Let $\chi$ be a smooth bi-Lipschitz canonical transformation as in the preceding sections. For every $\delta$ there exists $\delta^\ast$, $0<\delta^\ast<\delta$, such that for every $\Gamma\subset\rdd$ 
\begin{equation}\label{eq7.4}
\chi(\Gamma_{\delta^\ast})\subset\chi(\Gamma)_\delta,
\end{equation}
\begin{equation}\label{eq7.5}
\chi(\Gamma)_{\delta^\ast}\subset\chi(\Gamma_\delta).
\end{equation}
The constant $\delta^\ast$ depends on $\chi$ and $\delta$ but it is independent of $\Gamma$.
\end{lemma}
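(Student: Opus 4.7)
The plan is to derive both inclusions from two elementary features of the transformation $\chi$. First, the estimate \eqref{B2} applied to $\chi$ and to $\chi^{-1}$ yields a uniform Lipschitz constant $L>0$ with $|\chi(z)-\chi(w)|\leq L|z-w|$ and $|\chi^{-1}(\zeta)-\chi^{-1}(\zeta')|\leq L|\zeta-\zeta'|$ on $\rdd$. Second, writing $|\chi(z)|\leq L|z|+|\chi(0)|$ and applying the same bound to $\chi^{-1}$ gives the two-sided weight comparison $C^{-1}\langle z\rangle\leq\langle\chi(z)\rangle\leq C\langle z\rangle$ for some constant $C=C(\chi)>0$. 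Both $L$ and $C$ depend only on $\chi$.

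For the first inclusion \eqref{eq7.4}, I would pick an arbitrary $\zeta=\chi(z)\in\chi(\Gamma_{\delta^\ast})$ and select $z_0\in\Gamma$ realizing $|z-z_0|<\delta^\ast\langle z_0\rangle$. The Lipschitz bound gives $|\chi(z)-\chi(z_0)|\leq L\delta^\ast\langle z_0\rangle$, and the weight comparison upgrades the right-hand side to $LC\delta^\ast\langle\chi(z_0)\rangle$. Since $\chi(z_0)\in\chi(\Gamma)$, this places $\zeta$ inside $\chi(\Gamma)_\delta$ as soon as $LC\delta^\ast<\delta$.

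For the companion inclusion \eqref{eq7.5}, the argument is symmetric: given $\zeta\in\chi(\Gamma)_{\delta^\ast}$ with witness $\zeta_0=\chi(z_0)$, $z_0\in\Gamma$, and $|\zeta-\zeta_0|<\delta^\ast\langle\zeta_0\rangle$, I would set $z=\chi^{-1}(\zeta)$ and apply the Lipschitz bound for $\chi^{-1}$ followed by the weight comparison to obtain $|z-z_0|\leq L\delta^\ast\langle\zeta_0\rangle\leq LC\delta^\ast\langle z_0\rangle$. Hence $z\in\Gamma_\delta$ under the same smallness condition on $\delta^\ast$, and therefore $\zeta=\chi(z)\in\chi(\Gamma_\delta)$.

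Choosing any $\delta^\ast\in(0,\delta)$ with $LC\delta^\ast<\delta$, for instance $\delta^\ast=\min\{\delta/2,\delta/(2LC)\}$, validates both inclusions simultaneously; since $L$ and $C$ depend only on $\chi$, this $\delta^\ast$ is independent of $\Gamma$, as required. I do not foresee any substantive obstacle: the whole proof amounts to the observation that bi-Lipschitz maps preserve the scale measured by $\langle\cdot\rangle$ up to multiplicative constants, and one need only track those constants in both directions.
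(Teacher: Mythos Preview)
Your argument is correct. The paper does not actually prove this lemma in the text; it refers the reader to \cite[Lemmas 7.1, 7.2]{CNRanalitico2}. Your approach---using the uniform Lipschitz bounds on $\chi$ and $\chi^{-1}$ together with the resulting two-sided weight equivalence $\langle\chi(z)\rangle\asymp\langle z\rangle$---is the natural elementary argument and is essentially the one given in that reference.
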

In the following we shall argue on $f\in \cS'(\rd)$, and take windows $g\in \cS(\rd)$.

Since $\cup_{s\geq 0}M^p_{-s}(\rd)=\cS'(\rd)$ we have for some $s_0\geq 0$
\begin{equation}\label{eq7.6}
\intrdd |V_g f(z)|^p \la z\ra ^{-p s_0}dz<\infty.
\end{equation}

\begin{definition}\label{def7.3}
Let $f\in \cS'(\rd)$, $g\in \cS(\rd)\setminus\{0\}$, $\Gamma\subset\rdd$, $1\leq p\leq\infty$, $r\in\bR$. We say that $f$ is $M^p_r$-regular in $\Gamma$ if there exists  $\delta>0$ such that 
\begin{equation}\label{eq7.7}
\int_{\Gamma_\delta}|V_g f(z)|^p\la z\ra^{pr} dz<\infty.
\end{equation}
(obvious changes if $p=\infty$).
\end{definition}
Of course, \eqref{eq7.7} gives us some nontrivial information about $f$ only when $\Gamma$ is unbounded. We shall prove later that Definition \ref{def7.3} does not depend on the choice of the window $g\in \cS(\rd)$. \par
\begin{theorem}\label{teo7.4}
Let $e^{it H}$ and $\chi_t$ be defined as in the previous Sections, fix $u_0\in \cS'(\rd)$ and $\Gamma\subset\rdd$. If $u_0$ is $M^p_r$-regular in $\Gamma$, then $e^{it H}u_0$ is $M^p_r$-regular in $\chi_t(\Gamma)$.
\end{theorem}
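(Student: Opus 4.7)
The plan is to represent $V_g(e^{itH}u_0)(z)$ as a $w$-integral against the Gabor matrix $k(t,w,z)$ via \eqref{KGbM}, split this integral according to whether $w$ lies in the regularity region of $u_0$ or outside it, and bound each piece separately in $L^p(\chi_t(\Gamma)_\delta,\la z\ra^{pr}dz)$ for a suitably small $\delta>0$. The off-diagonal decay of $k(t,\cdot,\cdot)$ supplied by Theorem \ref{UNCP} will drive both bounds.

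The key preparatory step is a careful choice of the neighborhood parameters using Lemmas \ref{lemma7.1} and \ref{lemma7.2}. First, fix $\delta_0>0$ such that $V_g u_0\in L^p(\Gamma_{\delta_0},\la w\ra^{pr}dw)$, as granted by the hypothesis. Apply Lemma \ref{lemma7.2} (inclusion \eqref{eq7.5}) to $\chi_t$ at level $\delta_0$ to produce $\delta^*\in(0,\delta_0)$ with $\chi_t(\Gamma)_{\delta^*}\subset\chi_t(\Gamma_{\delta_0})$; bijectivity of $\chi_t$ rewrites this as $w\notin\Gamma_{\delta_0}\Rightarrow\chi_t(w)\notin\chi_t(\Gamma)_{\delta^*}$. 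A second application, now of Lemma \ref{lemma7.1} (inclusion \eqref{eq7.3}) to the set $\chi_t(\Gamma)$ at level $\delta^*$, yields $\delta\in(0,\delta^*)$ such that for every $z\in\chi_t(\Gamma)_\delta$ and every $z'\notin\chi_t(\Gamma)_{\delta^*}$ one has $|z-z'|\geq\delta\la z'\ra$. Specialising to $z'=\chi_t(w)$ with $w\notin\Gamma_{\delta_0}$, and using that $\chi_t$ is bi-Lipschitz so that $\la\chi_t(w)\ra$ is comparable to $\la w\ra$, one obtains the key off-diagonal estimate
\begin{equation*}
\la z-\chi_t(w)\ra\gtrsim\la z\ra+\la w\ra,\qquad z\in\chi_t(\Gamma)_\delta,\ w\notin\Gamma_{\delta_0},
\end{equation*}
the control on $\la z\ra$ following by a triangle-inequality argument once the control on $\la w\ra$ is in hand.

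With this estimate in place I split $V_g(e^{itH}u_0)(z)=A(z)+B(z)$ by partitioning the $w$-integral at $\Gamma_{\delta_0}$. For $A$, the bound \eqref{GM2} is exactly the kernel condition needed to invoke the Schur-type argument underlying Theorem \ref{contmp}, so $\|A\|_{L^p(\la z\ra^{pr}dz)}$ is controlled by the $L^p(\la w\ra^{pr}dw)$-norm of $V_gu_0\cdot\chi_{\Gamma_{\delta_0}}$, which is finite by hypothesis. For $B$, apply \eqref{GM2} with $s$ very large and use the off-diagonal estimate above to factor $\la z-\chi_t(w)\ra^{-s}\lesssim\la z\ra^{-s/2}\la w\ra^{-s/2}$ on the relevant region; since $u_0\in\cS'(\rd)$ implies $V_gu_0\in L^p(\la w\ra^{-ps_0}dw)$ for some $s_0\geq0$ by \eqref{HCbis}, a H\"older estimate bounds the remaining $w$-integral by a constant as soon as $s/2>s_0+2d/p'$. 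Thus $|B(z)|\lesssim\la z\ra^{-s/2}$ on $\chi_t(\Gamma)_\delta$, which is integrable against $\la z\ra^{pr}$ once $s$ is large enough. The main obstacle is precisely the neighborhood bookkeeping of the middle paragraph: once the off-diagonal gain is secured on the correct region, both $L^p$ estimates are routine variants of arguments already in the paper.
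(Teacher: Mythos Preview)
Your proposal is correct and follows essentially the same route as the paper: represent $V_g(e^{itH}u_0)$ via the Gabor matrix, do the neighborhood bookkeeping with Lemmas \ref{lemma7.1} and \ref{lemma7.2} to secure the off-diagonal gain $\la z-\chi_t(w)\ra\gtrsim\max\{\la z\ra,\la w\ra\}$ on the relevant region, and then split the $w$-integral at the boundary of the regularity set. The only cosmetic differences are that the paper uses \eqref{eq7.2} (together with \eqref{eq7.3}) rather than \eqref{eq7.3} alone in the second neighborhood step, and estimates both pieces by writing them as convolutions and applying Young's inequality, whereas you invoke the Schur argument for $A$ and a direct factorization plus H\"older for $B$; these are equivalent implementations of the same idea.
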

\begin{proof}
For sufficiently small $\delta>0$  we have \eqref{eq7.7} in $\Gamma_\delta$ whereas \eqref{eq7.6} is valid in $\rdd$ for some $s_0\geq 0$. Now, from Theorem \ref{T1.1} we have 
\begin{equation}\label{eq7.8}
V_g(e^{it H} u_0)(z)=\int k(t,w,z) V_gu_0(w)\, dw
\end{equation}
with 
\begin{equation}\label{eq7.9}
|k(t,w,z)|=|\langle e^{i t H} \pi(w) g,\pi(z)g\rangle|\lesssim \la z-\chi_t(w)\ra^{-s},
\end{equation}
 for every $s\geq0$.\par
 We want to show that $ e^{i t H}u_0$ is  $M^p_r$-regular in $\chi_t(\Gamma)$. To this end, using \eqref{eq7.5} in Lemma \ref{lemma7.2}, we take first $\delta^\ast<\delta$ such that $\chi_t(\Gamma)_{\delta^\ast}\subset\chi_t(\Gamma_\delta)$ and then using \eqref{eq7.2} in Lemma \ref{lemma7.1} we fix $\delta'<\delta^\ast$ such that 
 \begin{equation}\label{eq7.10}
 \big(\chi_t(\Gamma)_{\delta'}\big)_{\delta'}\subset\chi_t(\Gamma)_{\delta^\ast}\subset \chi_t(\Gamma_\delta).
 \end{equation}
 Note that for $w\not\in\Gamma_{\delta}$, i.e.\ $\chi_t(w)\not\in \chi_t(\Gamma_{\delta})$, and $z\in \chi_t(\Gamma)_{\delta'}$ we have 
 \begin{equation}\label{eq7.11}
 |z-\chi_t(w)|\gtrsim \max\{\langle z\rangle,\langle w\rangle\}
 \end{equation}
 since $\chi_t(w)\not\in\big((\chi_t(\Gamma)_{\delta'}\big)_{\delta'}$ in view of \eqref{eq7.10}, and we may use as well \eqref{eq7.3}.\par 
 Assuming for simplicity $p<\infty$, we shall prove 
 \begin{equation}\label{eq7.12}
 \int_{\chi_t(\Gamma)_{
  \delta'}}|V_g(e^{i t H}u_0)(z)|^p\la z\ra^{pr}\, dz<\infty,
 \end{equation}
  with $\delta'$ determined as before. Using \eqref{eq7.8} and \eqref{eq7.9}, we  estimate
\begin{equation}\label{5.8eq}
|\la z\ra^r V_g (e^{itH} u_0)(z)|\lesssim \intrdd I(z,w) \,dw,
\end{equation}
with
\begin{equation}\label{eq5.9}
I(z,w)= \la z \ra^r\la z-\chi_t(w)\ra^{-s}|V_g u_0(w)|.
\end{equation}
To show that $e^{itH} u_0$ is $M^p_r$-regular in $\chi_t (\Gamma)$ it will be sufficient to show that $$ \left\| \intrdd I(\cdot,w)\,dw\right\|_{L^p(\chi_t(\Gamma)_{\delta'})}<\infty.$$
First, we estimate $\intrdd I(z,w)\,dw$ for $z\in \chi_t(\Gamma)_{\delta'}$. We split the domain of integration into two domains $\Gamma_\delta$ and $\rdd\setminus \Gamma_\delta$. In $\rdd\setminus \Gamma_\delta$ we   use \eqref{eq7.11} to obtain
\begin{align*}\int_{\rdd\setminus \Gamma_\delta} I(z,w)\,dw &\leq \int_{\rdd\setminus \Gamma_\delta} \la z \ra^r\la w \ra ^{s_0}\la z-\chi_t(w)\ra^{-s}\frac{|V_g u_0(w)|}{\la w \ra ^{s_0}}\, dw\\
&\lesssim  \int_{\rdd\setminus \Gamma_\delta} \la z-\chi_t(w)\ra^{r+s_0-s}\frac{|V_g u_0(w)|}{\la w \ra ^{s_0}}\, dw\\
&\lesssim \left(\la \cdot\ra ^{r+s_0 -s}\ast \frac{|V_g u_0(\cdot)|}{\la \cdot \ra ^{s_0}}\right)(z).
\end{align*}
So by \eqref{eq7.6} and choosing $s$ in \eqref{eq7.9} so that $r+s_0-s<-2d$,
$$\left\|\int_{ \rdd\setminus \Gamma_\delta} I(\cdot,w)\,dw \right \|_{L^p(\chi_t(\Gamma)_{\delta'})}\lesssim \|\la \cdot\ra ^{r+s_0 -s}\|_{L^1(\rdd)}\|\,|V_g u_0|\la \cdot \ra ^{-s_0}\|_{L^p(\rdd)}<\infty.$$
 In the domain $\Gamma_\delta$,
we have
\begin{align*}\int_{ \Gamma_\delta} I(z,w)\,dw &\leq \int_{\Gamma_\delta} \la z \ra^r\la w \ra ^{-r}\la z-\chi_t(w)\ra^{-r}\la z-\chi_t(w)\ra^{r-s}|V_g u_0(w)|\la w \ra ^r\, dw\\
&\lesssim  \int_{ \Gamma_\delta} \la z-\chi_t(w)\ra^{r-s}|V_g u_0(w)|\la w \ra ^r\, dw\\
&\lesssim \la \chi_t^{-1}(\cdot)\ra ^{r -s}\ast \left(\mbox{Char}_{\Gamma_\delta}\cdot|V_g u_0
|\la \cdot \ra ^r\right)(z)
\end{align*}
where $\mbox{Char}_{\Gamma_\delta}$ is the characteristic function of the set $\Gamma_\delta$. The assumption  \eqref{eq7.7} yields to the estimate
\begin{align*}\left\|\int_{ \Gamma_\delta} I(\cdot,w)\,dw \right \|_{L^p(\chi_t(\Gamma)_{\delta'})}&\lesssim \|\la \chi_t^{-1}(\cdot)\ra ^{r -s}\|_{L^1(\rdd)}\| \,|V_g u_0|\la \cdot \ra ^r\|_{L^p(\Gamma_\delta)}\\
&\asymp \|\la \cdot\ra ^{r -s}\|_{L^1(\rdd)}\| V_g u_0\la \cdot \ra ^r\|_{L^p(\Gamma_\delta)} <\infty,
\end{align*}
because $\chi_t$ is a bi-Lipschitz diffeomorphism and we may take $s$ in \eqref{eq7.9} so large that $r-s<-2d$.
This concludes the proof.
\end{proof}
\par
\begin{remark}
It is now easy to check that Definition \ref{def7.3} does not depend on the choice of the window $g\in\cS(\rd)$. In fact, assume that the estimate \eqref{eq7.7} in Definition \ref{def7.3} is satisfied for some  $\delta>0$, and some choice of $g\in \cS(\rd)$. Then \eqref{eq7.7} is still satisfied, for some new  $\delta>0$, if we replace $g$ with $h\in \cS(\rd)$.  To prove this claim, observe that
 for every $s\geq0$,
 \[
 |V_h g(z)|\lesssim \la z \ra^{-s},\quad z\in \rdd.
 \]
 Lemma \ref{changewind} then gives
 \[
 |V_h f(w)|\lesssim\int \la w-z\ra^{-s}|V_g f(z)|\, dz.
 \]
 The claim follows by splitting  the domain of integration into $\Gamma_\delta$ and $\rdd\setminus\Gamma_\delta$, and arguing as in the proof of the preceding Theorem \ref{teo7.4}, being now $\chi_t={\rm identity}$. 
\end{remark}

 The following definition allows one to describe the position in phase space of the singularities of a function $f$.
 \begin{definition}\label{def7.7}
 Given $f\in \cS'(\rd)$, we shall call filter of the Gabor singularities of $f$ the collection of subsets of $\rdd$: 
 \[
 \Fu^p_r(f)=\{\Lambda\subset\rdd:\ f\ \textit{is}\,  M^p_r\textit{-regular\, in}\ \Gamma=\rdd\setminus \Lambda\},
 \]
 cf.\ Definition \ref{def7.3}. 
 \end{definition}
 
 Let us write $\Fu(f)= \Fu^p_r(f)$ for short.
 
 $\Fu(f)$ is a filter since if $\Lambda\in \Fu(f)$ and $\Lambda\subset\Lambda'$, then also $\Lambda'\in \Fu(f)$, and moreover if $\Lambda_1,\ldots,\Lambda_n\in \Fu(f)$ then also $\cap_{j=1}^n \Lambda_j\in \Fu(f)$. Note that any neighborhood of $\infty$ i.e.\ the complementary of a bounded set, belongs to $\Fu(f)$. We have $f\in \cS(\rd)$ if and only if $\emptyset\in \Fu(f)$, that is equivalent to saying that there exists $\Lambda_1,\ldots,\Lambda_n\in \Fu(f)$ such that $\cap_{j=1}^n \Lambda_j=\emptyset$.\par
 Theorem \ref{teo7.4}  now follows. Indeed, the inclusion $\chi_t(\Fu(f))\subset \Fu(e^{it H}f)$ is just a restatement of Theorem \ref{teo7.4}. The opposite inclusion is equivalent to $\chi_t^{-1}\Fu(e^{i t H}f)\subset \Fu(f)$, namely to $\chi_t^{-1}\Fu(g)\subset \Fu(e^{-i t H}g)$, which is true by reversing the time.

\end{document}